\newtheorem{thm}{THEOREM}[section]
\newtheorem{lem}[thm]{Lemma}
\newtheorem{definition}[thm]{DEFINITION}
\newtheorem{example}[thm]{EXAMPLE}
\newtheorem{proposition}[thm]{PROPOSITION}
\newtheorem{remark}[thm]{REMARK}
\providecommand{\norm}[1]{\left\lVert#1\right\rVert}
\providecommand{\abs}[1]{\left\lvert#1\right\rvert}
\providecommand{\pr}[1]{\left(#1\right)} 
\providecommand{\pp}[1]{\left[#1\right]} 
\providecommand{\set}[1]{\left\lbrace#1\right\rbrace} 
\providecommand{\scal}[1]{\left\langle#1\right\rangle}
\providecommand{\keywords}[1]{\textbf{\textit{Keywords:  }} #1}
\providecommand{\classification}[1]{\textbf{\textit{MSC:  }} #1}
\begin{document}
\author{Dan GOREAC\footnote{Université Paris-Est, LAMA (UMR 8050), UPEMLV, UPEC, CNRS, F-77454, Marne-la-Vallée, France, Corresponding author, email:  dan.goreac@u-pem.fr} }
\title{Border Avoidance: Necessary Regularity for Coefficients and Viscosity Approach}
\maketitle
\abstract{Motivated by the result of invariance of regular-boundary open sets in \cite{CannarsaDaPratoFrankowska2009} and multi-stability issues in gene networks, our paper focuses on three closely related aims. First, we give a necessary local Lipschitz-like condition in order to expect invariance of open sets (for deterministic systems). Comments on optimality are provided via examples. Second, we provide a border avoidance (near-viability) counterpart of \cite{CannarsaDaPratoFrankowska2009} for controlled Brownian diffusions and piecewise deterministic switched Markov processes (PDsMP). We equally discuss to which extent Lipschitz-continuity of the driving coefficients is needed. Finally, by applying the theoretical result on PDsMP to Hasty's model of bacteriophage (\cite{hasty_pradines_dolnik_collins_00}, \cite{crudu_debussche_radulescu_09}), we show the necessity of explicit modeling for the environmental cue triggering lysis.}\\\\
\keywords{ Invariance; Near-Viability; Viscosity Solutions; Brownian Diffusion; PDMP; Gene Networks}\\
\classification{93E20; 49L25; 60J60; 60J75; 34H05; 92C42}
\section{Introduction}
A special feature of controlled systems (either deterministic or presenting some randomness) is the ability to keep trajectories within confined domains $K$ (subsets of the state space). This property is known as \textit{invariance} when achieved with arbitrary control and \textit{viability} when special construction of the control is required. Since the pioneer work by Nagumo \cite{Nagumo_42}, both properties (invariance and viability) have been extensively studied in either deterministic or stochastic settings (Brownian or jump diffusion, piecewise deterministic Markov setting, etc.). The ensuing related literature is huge and we will only be able to mention a few of the available results (and methods). Traditionally, for closed sets $K$, the problem can be tackled either by contingent or quasi-tangency methods (e.g. \cite{aubin_91}, \cite{AubinDaPrato_90}, \cite{aubin_frankowska_90}, \cite{Gautier_Thibault_93}, \cite{CarjaNeculaVrabie2007}, etc.) or by using the theory of viscosity solutions (e.g. in \cite{BPQR1998}, \cite{BardiGoatin}, \cite{BardiJensen2002},  \cite{Peng_Zhu_08}, \cite{G8}, \cite{Rascanu_Nie_2012}). In \cite{CannarsaDaPratoFrankowska2009}, the authors consider the invariance problem for Brownian diffusions for an open set $\mathring{K}$ when $K$ has smooth border. They show that, whenever $K$ is a piecewise $C^{2,1}$-smooth domain, invariance of $K$ and invariance of $\mathring{K}$ are equivalent. In other terms, a solution starting from $x\in\mathring{K}$ not only remains in $K$ but it also avoids border $\partial K$. The method employed in \cite{CannarsaDaPratoFrankowska2009} relies on the regularity of the signed distance function ($\delta_K$ introduced in \cite{DelfourZolesio1994}) and a-priori estimates requiring Lipschitz-regularity of the driving coefficients. This direct method based on the use of a test function of type $-\ln\delta_K$ has also been used by the authors of \cite{BFQ2017}.\\
For complex systems of interaction between several players or species, the notion of multi-stability plays a central part. We have in mind the simplest example relevant to this framework in the hybrid modeling of gene networks: the phage lambda. Roughly speaking, in order to survive, the bacteriophage relies on the host (E-Coli). Two issues are then possible (for a temperate form of phage): either a cohabitation (lysogeny resulting in moderate replication of prophage) or high speed replication ultimately leading to excision (lysis). Following the simplified model introduced by \cite{hasty_pradines_dolnik_collins_00}, a piecewise deterministic model has been introduced in \cite{crudu_debussche_radulescu_09} (and the averaging probabilistic techniques rigorously justified in \cite{crudu_Debussche_Muller_Radulescu_2012}). In such models, migrating from lysogenic to lytic cycle   is indicated by a cro-repressor-related variable reaching some threshold level. In other words, as the model trajectory reaches this threshold level (the boundary of lysogenic region $\partial K^{lysogeny}$), the dynamics are altered to induce stable attraction to the lytic $0$-concentration of cro-repressor.\\
The aim of the present paper is threefold.\\
 First, we try to investigate to which extent the Lipschitz-regularity of the driving coefficients for the dynamics can be weakened in order to still envisage invariance of $\mathring{K}$ for compact domains $K$ of class $C^{2,1}$. By reasoning with respect to deterministic (uncontrolled) systems \begin{equation*}
dX_t^x=b\pr{X_t^x}dt,\textnormal{ for all }t\geq 0;\ X_0^x=x\in \mathbb{R}^N,
\end{equation*} we present a dychotomy-type necessary condition. This condition is motivated by the fact that what matters for invariance is not $b$ itself (see examples in Section \ref{SectionNecessary}) but the behavior locally around the border $\partial K$ of the outward velocity \[b^+(x)={\scal{b(x),\nu_K\pr{\pi_{\partial K}(x)}}}^+=\max\set{\scal{b(x),\nu_K\pr{\pi_{\partial K}(x)}},0},\]where $\pi_{\partial K}$ is the projection onto $\partial K$ and $\nu_K$ designates the outward unit normal to $K$. When the driving coefficient has a bad continuity modulus (the typical example being of Hölder type and the quality being measured by some distribution's inverse function being no worse that $\frac{1}{\norm{x-y}\log {\frac{1}{\norm{x-y}}}}$), the necessary condition for invariance of $\mathring{K}$ (in Proposition \ref{PropMainNec}) is of local Lipschitz-type \[\liminf_{x\in \mathring{K}, \delta_K(x)\rightarrow 0}\frac{\abs{b^+(x)-b^+\pr{\pi_{\partial K}(x)}}}{\norm{x-\pi_{\partial K}(x)}}<\infty.\] We present further examples concerning the optimality of the condition. In the necessary condition one cannot (in general) replace $\liminf$ with $\limsup$ (cf. Example \ref{ExpLiminfLimSup}). This condition may fail to hold for convenient continuity moduli (cf. Example \ref{Exp_xlnx}). Finally, the condition cannot be strengthened to a pointwise one (i.e. by replacing $\delta_K(x)\rightarrow 0$ with $x\rightarrow\bar{x}$ for all $\bar{x}\in \partial K$).\\\\
The second aim of the paper (Section \ref{Section_NearViabDiff}) is to provide the near-viability counterpart of \cite{CannarsaDaPratoFrankowska2009} for controlled Brownian diffusions
\begin{equation*}
\left\lbrace 
\begin{split} 
dX_t^{x,u}&=b\pr{X_t^{x,u},u(t)}dt+\sigma\pr{X_t^{x,u},u(t)}dW_t,\textnormal{ for }t\geq0,\\
X_0^{x,u}&=x\in \mathbb{R}^N.
\end{split}
\right.
\end{equation*}driven by regular coefficients. We adopt a slightly different method than the one used in \cite{CannarsaDaPratoFrankowska2009}. We note that near-viability enforced with initial data close to $\partial K$ is sufficient to imply near-viability of $\mathring{K}$. The near-viability indicator value function \[V(x):=\inf_u\mathbb{E}\pp{\int_{\mathbb{R}_+}e^{-\lambda t}\mathbbm{1}_{\pr{\mathring{K}}^c}\pr{X_t^{x,u}dt}}\]
is then approximated by regular subsolutions $\pr{V_n^{\frac{1}{n^2}}}$ of (approximating) Hamilton-Jacobi-Bellman equations. By multiplying these functions with $\delta_K$, one gets a family of smooth functions $W_n:=-\delta_K\times V_n^{\frac{1}{n^2}}$. Finally, by "comparing" (at global minimum points) these functions with the 0-constant (subsolution) and passing to the limit as $n\rightarrow\infty$,  it follows that, as soon as the closed set $K$ is near-viable, $V$ cannot be strictly positive at any point $x\in\mathring{K}$. This comparison uses extensively the (closed-set) viability characterization in \cite{BCQ2001} and some estimates for the infinitesimal operator $\mathcal{L}^u$ applied to the signed distance $\delta_K$ locally close to $\partial K$. Our results are proven for bounded Lipschitz-continuous coefficients $b$ and $\sigma$. However, Lipschitz-regularity is only required to provide the smooth approximations $V_n^{\frac{1}{n^2}}$ (using Krylov's shaking of coefficients method cf. \cite{Krylov_00}). Alternatively, generalization of Gronwall's inequality may be envisaged for suitable continuity moduli (see Remark \ref{RemLipschitzOsgood}) and the remaining steps in the proof of the main result (Theorem \ref{ThmMainDiff}) apply with local Lipschitz-like condition (similar to the necessary ones).\\\\
Third, we show that the same program (explained for Brownian diffusions) works in the setting of controlled piecewise continuous switched Markov processes. This is a particular class of piecewise deterministic Markov processes (abreviated PDMP and introduced in \cite{Davis_84},\cite{davis_93}) consisting of a couple mode/average behavior $\pr{\Gamma, X}$. The switch changes the mode and the piecewise deterministic (mode-dependent) behavior gives the flow for the averaged component. The main theoretical result of Section \ref{SectionPDMP} (Theorem \ref{ThmMainPDMP}) gives the necessary and sufficient condition in order to guarantee near-viability of PDMP of switch-type. We discuss the implication on the simplified model of bacteriophage (cf. \cite{hasty_pradines_dolnik_collins_00}, \cite{crudu_debussche_radulescu_09}). To cope with the fundamental bistability issues, the authors of \cite{crudu_debussche_radulescu_09} invoke the quasi-steady distribution heuristics (resulting in some quartic equation cf. \cite[Eq. (26)]{crudu_debussche_radulescu_09} susceptible to give the lysogenic "steady-state"). The authors equally mention failure of such computations for nonlinear systems (cf. \cite{Mastnyetc2007}). For the hybrid model of phage $\lambda$, our results on invariance (or near-viability) turn out to prove an even stronger dichotomy: no (single) set of reaction speeds can guarantee the toggle between a lysogenic behavior (even when the latter is not reduced to a single steady-state but to a whole invariant region) an a lytic one (leading as usual to the attractor $0$). Indeed, starting from a clear lysogenic-associated initial configuration $x\in \mathring{K}^{lysogeny}$, the system can never reach the lysis-triggering zone ($\partial K^{lysogeny}$). This reinforces the biological belief that such toggle follows from an environmental (external) cue and should be explicitly taken into account in the modeling approach.\\\\
Let us briefly explain how the paper is organized. Section \ref{Prelim} presents the main notations employed throughout the paper as well as the invariance and near-viability notions. We also gather here some useful tools for signed distances. Section \ref{SectionNecessary} addresses the necessary local-Lipschitz condition for deterministic systems in order to expect invariance of open sets. The main result (Proposition \ref{PropMainNec}) is stated and accompanied by examples commenting the optimality of the condition. The main results on the near-viability of open sets with respect to Brownian-driven diffusions are stated in Section \ref{Section_NearViabDiff}. The ideas of proofs are outlined as well as some remarks concerning possible generalizations. Finally, the piecewise deterministic switch framework is presented in Section \ref{SectionPDMP}. We begin with a presentation of Hasty's model for bacteriophage lambda. We continue with structure considerations on PDMP and the statement of the theoretical result on near-viability (Theorem \ref{ThmMainPDMP}). Finally, we come back to the mathematical model in \cite{crudu_debussche_radulescu_09} and ensuing comments. All the proofs of the previously stated results are gathered in Section \ref{SectionProofs}.
\section{Preliminaries}
\label{Prelim}
\subsection{Notation, Definitions}
Throughout the paper we will make use of the following notations. \\
We denote by $\mathbb{R}_+$ the set of non-negative real numbers. For $x\in \mathbb{R}$, we let $x^+$ to be $x$ if $x\leq 0$ and $0$ otherwise (i.e. $x^+=\max\set{x,0}$). The \textit{absolute value} of $x\in\mathbb{R}$ is denoted by $\abs{x}$.\\
The state space (for our systems) is assumed to be some Euclidean space $\mathbb{R}^N$, where $N\geq 1$ is some positive integer. We denote by $\scal{\cdot,\cdot}$ the usual  Euclidean scalar product and by $\norm{\cdot}$ the induced Euclidean norm. \\
Whenever $O\subset\mathbb{R}^N$, 
\begin{itemize}
\item we denote its \textit{characteristic} ($0/1$-valued) function by $\mathbbm{1}_O$ (this notation also applies to general sets which are not subsets of an Euclidean space);
\item we denote by $O^c$ the \textit{complementary} set i.e. $\mathbb{R}^N\setminus O$, by $\mathring{O}$ its \textit{interior} and by $\bar{O}$ its \textit{closure} (these notation also apply to general subsets with an additional requirement of topology when necessary);
\item we denote by $d_O$ the \textit{Euclidean distance} function $d_O:\mathbb{R}^N\longrightarrow\mathbb{R}_+$ given by $d_O(x):=\inf_{y\in O}\norm{y-x},$ for all $x\in\mathbb{R}^N$, where $\inf$ stands for the infimum;
\item whenever $O$ is closed, the \textit{projection set} of $x\in\mathbb{R}^N$ (onto $O$) is given by \[\Pi_O(x):=\set{y\in O:\ d_O(x)=\norm{y-x}}.\] Whenever this set reduces to a singleton, we denote by $\pi_O(x)$ its unique element;
\item whenever $O$ is closed with nonempty interior $\mathring{O}$ and boundary $\partial O$, the \textit{oriented distance} from $\partial O$ is the function $\delta_O:\mathbb{R}^N\longrightarrow\mathbb{R}$ given by \[\delta_O(x):=d_{\partial O}(x)\mathbbm{1}_O(x)-d_{\partial O}(x)\mathbbm{1}_{O^c}(x),\textnormal{ for all }x\in\mathbb{R}^N;\]
\item whenever $O$ is open, we denote by \\
- $C^1(O)$ the set of real-valued differentiable functions on $O$ and by $D\phi$ the gradient of $\phi\in C^1(O)$,\\
- $C^2(O)$ the set of real-valued twice differentiable functions on $O$ and by $D^2\phi$ the Hessian matrix of $\phi\in C^2(O)$,\\
- $C^{2,1}(O)$ the subset of $C^2(O)$ of functions having bounded Lipschitz-continuous second order derivative.
\end{itemize} 
A set $K\subset \mathbb{R}^N$ is said to be 
\begin{itemize}
\item \textit{a closed domain on class $C^{2,1}$} if it is closed, connected and, for all $x\in\partial K$, there exists a positive radius $r>0$ and a regular function $\phi:\set{y \in \mathbb{R}^N:\ \norm{x-y}<r}\longrightarrow\mathbb{R}$ belonging to $C^{2,1}\pr{\set{y \in \mathbb{R}^N:\ \norm{x-y}<r}}$ such that \[\set{y\in \partial K:\ \norm{y-x}<r}=\set{y \in \mathbb{R}^N:\ \phi(y)=0};\]
\item \textit{a compact domain of class $C^{2,1}$} if it is a closed domain of class $C^{2,1}$ and compact.
\end{itemize}
The space of matrix of type $M\times N$ ($M$ lines and $N$ columns for positive integers $M,N$) with real entries is denoted by $\mathbb{R}^{M\times N}$. For a square matrix $A=\pr{a_{i,j}}_{1\leq i,j\leq N}\in \mathbb{R}^{N\times N}$, its trace is given by $Tr\pp{A}:=\sum_{1\leq i\leq N}a_{i,i}$. Finally, the space of \textit{symmetric matrix} is denoted by $\mathcal{S}^N$.\\
Finally, in Section \ref{Section_NearViabDiff} we are going to work with Lipschitz-driven controlled stochastic systems. We let $U$ denote some compact metric space and refer to $U$ as being the \textit{control space}.\\
Following \cite{Krylov_00}, we recall some notations. Given a bounded, Lipschitz-continuous function $f:\mathbb{R}^n\times U\longrightarrow \mathbb{R}$, we set
\begin{equation}
\label{HölderNorms}
\norm{f}_0:=\sup_{x\in \mathbb{R}^N, u\in U}\abs{f(x,u)},\ \pp{f}_1:=\sup_{x\in \mathbb{R}^N, y\in \mathbb{R}^N, x\neq y, u\in U}\frac{\abs{f(x,u)-f(y,u)}}{\norm{x-y}},\ \norm{f}_1:=\norm{f}_0+\pp{f}_1.
\end{equation}
When the function $f$ is $\mathbb{R}^N$-valued, the notations remain the same but $\abs{\cdot}$ is replaced with $\norm{\cdot}$. If the function $f$ is matrix $\mathbb{R}^{N\times M}$-valued, the norm is induced by $\sqrt{Tr\pp{ff^*}}$.\\
Let us now recall the concepts of viability and near- (or $\varepsilon$-)viability. To this purpose, we will consider the controlled trajectories of our systems (deterministic, diffusive, piecewise deterministic switched, etc.) denoted by $X_t^{x,u}$, starting (at time $t=0$) from $x\in \mathbb{R}^N$ and controlled by some (conveniently admissible) $U$-valued control process $u$. The notion of admissibility will be addressed in the corresponding sections. These processes will live on a probability space generically denoted by $\pr{\Omega,\mathcal{F},\mathbb{P}}$.
\begin{definition}
\begin{itemize}
\item[i.] The set $O\subset \mathbb{R}^N$ is said to be \textit{invariant} if, for every initial datum $x\in O$ and every admissible control process $u$, the associated trajectory satisfies $X_t^{x,u}\in O,\ dt\times\mathbb{P}-a.s.\ on\ \mathbb{R}_+\times \Omega$.\\
\item[ii.] The set $O$ is said to be \textit{(strongly) viable} if, for every initial datum $x\in O$ there exists an admissible control process $u$ such that the associated trajectory satisfies $X_t^{x,u}\in O,\ dt\times\mathbb{P}-a.s.\ on\ \mathbb{R}_+\times \Omega$.\\
\item[iii.] The set $O$ is said to be \textit{near-viable} or $\varepsilon$-\textit{viable} if, for every $x\in O$ and every $\varepsilon>0$, there exists a control process $u^\varepsilon$ such that the (exponential/discounted in time) probability of occupation of the exterior of $O$ is less than $\varepsilon$ i.e. \[\pr{\mathcal{E}xp(\lambda)\times\mathbb{P}}\pr{\set{\pr{t,X_t^{x,u^\varepsilon}}:t\geq 0,\ X_t^{x,u^\varepsilon}\in O^c }}=\lambda\mathbb{E}\pp{\int_{\mathbb{R}_+}e^{-\lambda t}\mathbbm{1}_{O^c}\pr{X_t^{x,u^\varepsilon}}dt}\leq \varepsilon.\]
\end{itemize}
\end{definition}
\begin{remark}
\begin{itemize}
\item[i.] The property of near-viability is equivalent to the value function \[v_O(x):=\inf_u\mathbb{E}\pp{\int_{\mathbb{R}_+}e^{-\lambda t}\mathbbm{1}_{O^c}\pr{X_t^{x,u^\varepsilon}}dt}\] being $0$ for all $x\in O$. Again,the infimum is taken over properly measurable $U$-valued process.\\
\item[ii.] The actual value of $\lambda>0$ does not change the notion, since the resulting measures are equivalent.
\item[iii.] Viability and  near-viability are equivalent under standard convexity assumptions on coefficients as soon as $O$ is closed (implying that the criterion $\mathbbm{1}_{O^c}$ is lower semi-continuous). For further details, the reader is invited to consult \cite{ElKarouiNguyenJeanblanc87} for diffusions, respectively \cite{Dempster_89} for piecewise deterministic Markov processes.
\item[iv.] If one wishes to express the invariance property through an optimal control problem, one only needs to replace the infimum in the definition of $v_O$ with supremum. 
\end{itemize}
\end{remark}
\subsection{Some Tools}
Following \cite{CannarsaDaPratoFrankowska2009}, we recall some elements linked to regular-border closed sets $K$. We assume $K\subset\mathbb{R}^N$ closed set with nonempty interior $\mathring K$ and boundary $\partial K$. The following proposition gathers some useful properties of compact domains of class $C^{2,1}$ (cf. \cite{CannarsaDaPratoFrankowska2009}, \cite{DelfourZolesio1994}).
\begin{proposition}
\label{PropToolsK}
Let us assume $K\subset\mathbb{R}^N$ to be a compact domain of class $C^{2,1}$. Then there exists some $\varepsilon_0>0$ such that 
\begin{itemize}
\item[i. ] For every $x\in K_{\varepsilon_0}:=\set{x\in K: \delta_K(x)\leq \varepsilon_0}$, the projection set $\Pi_{\partial K}(x)$ reduces to a unique element denoted by $\pi_{\partial K}(x)$ (i.e. $\norm{x-\pi_{\partial K}(x)}=\delta_K(x)$);
\item[ii. ] the function $\delta_K$ is of class $C^{2,1}(O)$ for some open set $O$ containing $K_{\varepsilon_0}$ and \[D\delta_K(x)=D\delta_K\pr{\pi_{\partial K}(x)}=-\nu_K\pr{\pi_{\partial K}(x)},\]where, $\nu_K(\cdot)$ stands for the outward unit normal to $K$.
\end{itemize}
Moreover, $\varepsilon_0>0$ can be chosen such that
\begin{itemize}
\item[iii. ] there exists a function $g\in C^{2,1}\pr{\mathbb{R}^N}$ satisfying $0\leq g(x)\leq 1, \textnormal{ for all }x\in K$, $0<g(x), \textnormal{ for all }x\in K\setminus K_{\varepsilon_0}$ and
$g(x)=\delta_K(x), \textnormal{ for all }x\in K_{\varepsilon_0}.$
\end{itemize}
\end{proposition}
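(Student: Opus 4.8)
The plan is to read off \textnormal{i.}--\textnormal{ii.} from the tubular-neighbourhood structure of a compact $C^{2,1}$ hypersurface --- this is, up to cosmetic changes, \cite{DelfourZolesio1994} and \cite{CannarsaDaPratoFrankowska2009} --- and to obtain \textnormal{iii.} by composing $\delta_K$ with a one-variable saturation. For the first part I would introduce the normal map $\Psi(y,s):=y-s\,\nu_K(y)$ on $\partial K\times\mathbb{R}$; since $\partial K$ is locally the zero set of the $C^{2,1}$ charts $\phi$, its outward normal $\nu_K$ is $C^{1,1}$ on $\partial K$, so $\Psi$ is $C^{1,1}$, its differential at $(y,0)$ is the linear isomorphism $T_y\partial K\oplus\mathbb{R}\,\nu_K(y)\to\mathbb{R}^N$, and the $C^{1,1}$ inverse function theorem together with compactness of $\partial K$ yields $\varepsilon_0\in(0,1)$ such that $\Psi$ restricts to a $C^{1,1}$-diffeomorphism of $\partial K\times(-2\varepsilon_0,2\varepsilon_0)$ onto the open neighbourhood $O:=\set{x:d_{\partial K}(x)<2\varepsilon_0}$ of $\partial K$ (equivalently, $\partial K$ has reach $\geq2\varepsilon_0$). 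A standard curvature comparison shows that for $x=\Psi(y,s)\in O$ the point $y$ is the unique nearest point of $\partial K$, so $\Pi_{\partial K}(x)=\set{y}$, $\pi_{\partial K}(x)=y$ and, on $O\cap K\supset K_{\varepsilon_0}$, $\delta_K(x)=s$; this is \textnormal{i.}

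The step I expect to be the real obstacle is extracting the \emph{full} $C^{2,1}$-regularity in \textnormal{ii.} rather than just the $C^{1,1}$ one gets for free from $\delta_K=(\text{second coordinate})\circ\Psi^{-1}$. I would use the exact gradient identity: differentiating $d_{\partial K}^2=\delta_K^2$ and invoking the envelope (Danskin) formula at the unique minimiser $\pi_{\partial K}(x)$ gives, on $O\setminus\partial K$, $2\delta_K(x)D\delta_K(x)=2\bigl(x-\pi_{\partial K}(x)\bigr)=-2\delta_K(x)\,\nu_K\!\bigl(\pi_{\partial K}(x)\bigr)$; since $\delta_K$ is $1$-Lipschitz and $-\nu_K\circ\pi_{\partial K}$ extends continuously across $\partial K$ (there $\pi_{\partial K}$ is the identity), one concludes $\delta_K\in C^1(O)$ with $D\delta_K=-\nu_K\circ\pi_{\partial K}$ on all of $O$, whence also $D\delta_K(x)=D\delta_K(\pi_{\partial K}(x))$ from $\pi_{\partial K}\circ\pi_{\partial K}=\pi_{\partial K}$. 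Finally, $\pi_{\partial K}$ is $C^{1,1}$ (first coordinate of $\Psi^{-1}$) and $\nu_K$ is $C^{1,1}$, so $D\delta_K=-\nu_K\circ\pi_{\partial K}$ is $C^{1,1}$ on $O$, i.e. $\delta_K\in C^{2,1}(O)$, proving \textnormal{ii.}

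For \textnormal{iii.}, pick a nondecreasing $\rho\in C^\infty(\mathbb{R})$ with $\rho(s)=s$ on $[-\varepsilon_0,\varepsilon_0]$, $\rho\equiv a$ on $[\tfrac32\varepsilon_0,\infty)$ for some $a\in(\varepsilon_0,1]$, and $\rho\equiv-\varepsilon_0$ on $(-\infty,-\tfrac32\varepsilon_0]$, so that $\rho(s)>0\Leftrightarrow s>0$ and $0\le\rho(s)\le a\le1$ for $s\ge0$. Define $g:=\rho\circ\delta_K$ on $O$, $g:=a$ on $\set{x\in K:\delta_K(x)\geq2\varepsilon_0}$, and $g:=-\varepsilon_0$ on $\set{x\notin K:\delta_K(x)\leq-2\varepsilon_0}$. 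These three sets cover $\mathbb{R}^N$; near the interfaces $\set{\abs{\delta_K}=2\varepsilon_0}$ one has $\abs{\delta_K}>\tfrac32\varepsilon_0$, a region on which $g$ is locally constant, so the gluing is trivially $C^{2,1}$, while on the bounded open set $O$ one has $g=\rho\circ\delta_K\in C^{2,1}(O)$ by \textnormal{ii.} (and $\rho\in C^\infty$); hence $g\in C^{2,1}(\mathbb{R}^N)$. Since $\delta_K\in[0,\varepsilon_0]$ on $K_{\varepsilon_0}$ we get $g=\delta_K$ there; since $\delta_K>\varepsilon_0>0$ on $K\setminus K_{\varepsilon_0}$ we get $g>0$ there; and since $\delta_K\geq0$ on $K$ we get $0=\rho(0)\le g\le a\le1$ on $K$. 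This yields the function required in \textnormal{iii.}, completing the proof.
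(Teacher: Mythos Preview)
The paper does not prove this proposition; it is stated as a collection of known facts with the parenthetical citation ``(cf.\ \cite{CannarsaDaPratoFrankowska2009}, \cite{DelfourZolesio1994})'' and no argument is given anywhere in Section~\ref{SectionProofs}. So there is nothing to compare against beyond those references, and your proposal is precisely the standard route one extracts from them: the tubular-neighbourhood diffeomorphism $\Psi(y,s)=y-s\,\nu_K(y)$ for items i.--ii., and a one-variable saturation $\rho\circ\delta_K$ for item iii. Your argument is correct; in particular the step you flagged as delicate---upgrading from the $C^{1,1}$ regularity one gets for free to the full $C^{2,1}$ regularity of $\delta_K$---is handled properly via the identity $D\delta_K=-\nu_K\circ\pi_{\partial K}$ together with the observation that both $\nu_K$ and $\pi_{\partial K}$ are $C^{1,1}$, so their composition has Lipschitz derivative on the compact tube.
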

\begin{remark}
Although stated and proven under the assumption of $K$ being a compact domain of class $C^{2,1}$, all our results of Sections \ref{Section_NearViabDiff} and \ref{SectionPDMP} are entirely based on Proposition \ref{PropToolsK} (and not on $K$ itself). They can be extended, without any particular effort to piecewise $C^{2,1}$-smooth domains (following \cite{CannarsaDaPratoFrankowska2009}).
\end{remark}
Unless stated otherwise, throughout the remaining of the paper, $K\subset\mathbb{R}^N$ will always stand for a compact domain of class $C^{2,1}$.
\section{On a Necessary Condition for the Invariance of Open Sets}
\label{SectionNecessary}
A key element for guaranteeing the invariance of $\mathring{K}$ (for regular-boundary $K$) under stochastic (uncontrolled) dynamics in \cite{CannarsaDaPratoFrankowska2009} is to give some a priori estimates on the infinitesimal operator applied to the signed distance function. To this purpose, the authors of \cite{CannarsaDaPratoFrankowska2009} deal with Lipschitz-continuous coefficients $b$ and $\sigma$. We will begin with discussing some examples of non-Lipschitz coefficients keeping the system in an open set. (To keep the arguments simple, we will assume that $\sigma$ vanishes.) This will show that the only element of interest for invariance is the (positive part of the) projection onto the exiting normal (and this locally close to the boundary).
\subsection{Specific Examples}
Let us now focus on the uncontrolled differential system
\begin{equation}
\label{EqB}
dX_t^x=b\pr{X_t^x}dt,\textnormal{ for all }t\geq 0;\ X_0^x=x\in \mathbb{R}^N.
\end{equation}
If the Lipschitz condition on the coefficient function is dropped, then invariance of closed sets might not imply invariance of their interior.
\begin{example}
Let us consider the one-dimensional, deterministic, uncontrolled coefficient $b:[
0,1]\longrightarrow\mathbb{R}$ given by $b(x):=\sqrt{\abs{1-x}}$, for all $x\in[0,1]$. It is obvious that the set $[0,1]$ is invariant. However, $(0,1)$ is not invariant (no even locally in time i.e. up to some space-independent $T>0$). Indeed, it is easy to see that $X_t^x=1$, for all $t\geq 2\sqrt{1-x}$. 
\end{example}
However, invariance of an open set may hold true even without $b$ being Lipschitz continuous. In the one-dimensional framework, this is the case, for instance, whenever the trajectories are systematically pointing towards the interior of the set.
\begin{example}
Let us consider the one-dimensional, deterministic, uncontrolled coefficient $b:\mathbb{R}\longrightarrow\mathbb{R}$ given by \[
b(x):=
-\sqrt{\abs{1-x}}\mathbbm{1}_{\pr{\frac{3}{4},\infty}}(x)+(-2x+1)\mathbbm{1}_{\pp{\frac{1}{4},\frac{3}{4}}}(x)+\sqrt{\abs{x}}\mathbbm{1}_{\pr{-\infty,\frac{1}{4}}}(x).
\]
Even though the coefficient is not Lipschitz-continuous, it is clear that for any interval $K:=[\alpha,\beta]$ such that $0\leq \alpha<\frac{1}{2}< \beta\leq 1$, the interior $(\alpha,\beta)$ is invariant. A simple computation yields that the modified coefficient 
$b^+(x):=\pr{b(x)\nu_K\pr{\pi_{\partial K}(x)}}^+$ is zero (thus being locally Lipschitz) on a neighborhood $\left[ \alpha, \max \pr{\alpha+\varepsilon,\frac{1}{4}}\right) \cup \left( \min\pr{\frac{1}{4},\beta-\varepsilon} ,\beta \right]$. Again, we have employed the notations for projection $\pi_{\partial K}$ and outward unit normal $\nu_K$.
\end{example}
\begin{remark}
\label{RemarkExp}
\begin{itemize}
\item[i.] Whenever $b$ is Lipschitz continuous on $K$, this type of continuity is quantified by \[\pp{b}_{1,K}:=\sup_{K\ni x\neq y \in K}\frac{\norm{b(y)-b(x)}}{\norm{y-x}}<\infty.\]
Thus, the local Lipschitz condition on $b$ (locally around $\partial K$) implies, in particular, \[\liminf_{\mathring{K}\ni x; \delta_K(x)\rightarrow 0}\frac{\norm{b^+(x)-b^+\pr{\pi_{\partial K}(x)}}}{\norm{x-\pi_{\partial K}(x)}}=\liminf_{\mathring{K}\ni x; \delta_K(x)\rightarrow 0}\frac{\norm{b^+(x)-b^+\pr{\pi_{\partial K}(x)}}}{\delta_K(x)}<\infty.\] In this case, one can actually replace $\liminf$ with $\limsup$.
\item[ii.] It may be interesting to note that, although the previous system is driven by a non-Lipschitz coefficient, one can construct a system $X^{x,K}$ driven by a Lipschitz coefficient $b_K$ such that $\mathring{K}$ is invariant w.r.t. $X^{x,K}$ and $\delta_{\partial K}\pr{X_t^{x,K}}\leq \delta_{\partial K}\pr{X_t^{x}}$ for all $0\leq t$ (thus providing some kind of dominating behavior). A simple modification leading to this behavior is obtained by taking \[b_{[\alpha,\beta]}(x):=0\times \mathbbm{1}_{\set{\frac{3}{4}<x}\cup\set{\frac{1}{4}>x}}+\pr{4x-3}\times \mathbbm{1}_{x\in\pp{\frac{2}{3},\frac{3}{4}}}+\pr{-2x+1}\times \mathbbm{1}_{x\in\pr{\frac{1}{3},\frac{2}{3}}}+\pr{4x-1}\times \mathbbm{1}_{x\in\pp{\frac{1}{4},\frac{1}{3}}}.\] If this kind of dominated modification can be constructed, the condition in i. only needs to be checked on the modification (and not on the initial system).
\end{itemize}
\end{remark}
\subsection{A Local Lipschitz-like Necessary Condition}
From now on, we assume $b$ to be bounded and uniformly continuous. This guarantees the existence of solution(s) to (\ref{EqB}). Whenever the solution fails to be unique, invariance is to be understood as every (local) solution to (\ref{EqB}) starting from some (arbitrary) $x\in O$ to stay in $O$ (or for as long as it is defined in other frameworks). Before proving the necessity of a local Lipschitz-like condition (similar to item i in Remark \ref{RemarkExp}), we set, for every $x\in K$ such that $\delta_K(x)\leq\varepsilon_0$,
\begin{equation}
\label{BPlus}
b^+(x)={\scal{b(x),\nu_K\pr{\pi_{\partial K}(x)}}}^+=\max\set{\scal{b(x),\nu_K\pr{\pi_{\partial K}(x)}},0}.
\end{equation}
For every $\varepsilon\in (0,\varepsilon_0]$, we define \begin{equation}
\label{DefZeta}\zeta\pr{\varepsilon}=\inf_{x\in \mathring{K}, \delta_K(x)\leq\varepsilon}\frac{\abs{b^+(x)-b^+\pr{\pi_{\partial K}(x)}}}{\delta_K(x)}.\end{equation}
\begin{proposition}
\label{PropZeta}
The function $\zeta:(0,\varepsilon_0]\longrightarrow\mathbb{R}_+$ is a non-increasing, left-continuous function.
\end{proposition}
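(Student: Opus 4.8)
The plan is to read monotonicity straight off the definition and then reduce left-continuity to a one-sided estimate obtained by slightly perturbing a near-optimal point towards the boundary.

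First, for $0<\varepsilon'\le\varepsilon\le\varepsilon_0$ the set $\set{x\in\mathring K:\delta_K(x)\le\varepsilon'}$ is contained in $\set{x\in\mathring K:\delta_K(x)\le\varepsilon}$, so taking the infimum in (\ref{DefZeta}) of the nonnegative quotient over the larger set can only lower the value; hence $\zeta(\varepsilon')\ge\zeta(\varepsilon)$, i.e.\ $\zeta$ is non-increasing. Since each of these sets is nonempty (interior points at distance $<\varepsilon$ from $\partial K$ exist, by the structure recalled in Proposition~\ref{PropToolsK}) and the quotient is $\ge 0$, $\zeta$ is genuinely $\mathbb{R}_+$-valued. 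Being monotone, $\zeta$ admits at every $\varepsilon\in(0,\varepsilon_0]$ a left limit $\zeta(\varepsilon^-)=\lim_{\varepsilon'\uparrow\varepsilon}\zeta(\varepsilon')=\inf_{\varepsilon'<\varepsilon}\zeta(\varepsilon')$, and monotonicity already gives $\zeta(\varepsilon^-)\ge\zeta(\varepsilon)$. It therefore remains only to prove the reverse inequality $\zeta(\varepsilon^-)\le\zeta(\varepsilon)$.

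Fix $\varepsilon$ and $\eta>0$ and pick $x\in\mathring K$ with $\delta_K(x)\le\varepsilon$ and $\dfrac{\abs{b^+(x)-b^+\pr{\pi_{\partial K}(x)}}}{\delta_K(x)}<\zeta(\varepsilon)+\eta$. If $\delta_K(x)<\varepsilon$, then $x$ already competes in the infimum defining $\zeta(\delta_K(x))$ (note $\delta_K(x)\in(0,\varepsilon_0]$ since $x\in\mathring K$), whence $\zeta(\varepsilon^-)\le\zeta(\delta_K(x))<\zeta(\varepsilon)+\eta$. The only remaining case is $\delta_K(x)=\varepsilon$, and this is where the small amount of work lies: we move $x$ slightly towards $\partial K$ along the inward normal, setting $x_\theta:=\pi_{\partial K}(x)-\theta\,\nu_K\pr{\pi_{\partial K}(x)}$ for $\theta\in(0,\varepsilon)$, so that $x_\theta\in\mathring K$, $\delta_K(x_\theta)=\theta<\varepsilon$ and $\pi_{\partial K}(x_\theta)=\pi_{\partial K}(x)$ (these are precisely the reach properties contained in Proposition~\ref{PropToolsK}), while $x_\theta\to x$ as $\theta\uparrow\varepsilon$. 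Since $b$ is bounded and uniformly continuous and $\pi_{\partial K},\nu_K$ are continuous on $K_{\varepsilon_0}$, the map $x\mapsto b^+(x)$ is continuous there; hence the quotient evaluated at $x_\theta$ converges to the one at $x$, and for $\theta$ sufficiently close to $\varepsilon$ it stays $<\zeta(\varepsilon)+\eta$. As in the first case, $\zeta(\varepsilon^-)\le\zeta(\theta)<\zeta(\varepsilon)+\eta$. Letting $\eta\downarrow0$ yields $\zeta(\varepsilon^-)\le\zeta(\varepsilon)$, and the proof is complete.

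The main (and essentially only) obstacle is the degenerate situation in which the near-optimal point $x$ sits exactly on the level set $\set{\delta_K=\varepsilon}$: there it is not automatic that $x$ is admissible for any $\zeta(\varepsilon')$ with $\varepsilon'<\varepsilon$, so one must manufacture nearby admissible points with a comparable quotient value. The continuity of $b^+$ close to $\partial K$ — which rests on the uniform continuity of $b$ together with the regularity of $\pi_{\partial K}$ and $\nu_K$ from Proposition~\ref{PropToolsK}, and on the fact that $\delta_K(x_\theta)=\theta>0$ keeps us away from division by zero — is exactly what makes the perturbation work. Note also that no right-continuity is claimed or expected, which is consistent with the infimum in (\ref{DefZeta}) being taken over the closed constraint $\delta_K(\cdot)\le\varepsilon$.
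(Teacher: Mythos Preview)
Your proof is correct and follows essentially the same approach as the paper: both establish left-continuity by moving a (near-)optimal point at level $\delta_K=\varepsilon$ towards the boundary along the normal segment $\theta\mapsto\pi_{\partial K}(x)-\theta\,\nu_K(\pi_{\partial K}(x))$ and invoking continuity of the quotient away from $\partial K$. The only cosmetic differences are that the paper argues by contradiction and appeals to compactness of $K$ to obtain an exact minimizer $x^{opt}$ with $\delta_K(x^{opt})=\varepsilon$, whereas you work directly with an $\eta$-near-optimizer; your version is marginally more economical in that respect.
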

The proof is quite standard. For our readers' sake, we provide it in Section \ref{SectionProofs}.\\
\begin{remark}\label{RemZetaConnect}As careful look at the proof of the necessary condition (Proposition \ref{PropMainNec}), these functions $\zeta$ can either be defined globally or with respect to each connected component of $\partial K$.\end{remark}
Whenever $\lim_{\varepsilon\rightarrow 0}\inf_{x\in \mathring{K}, \delta_K(x)\leq\varepsilon}\frac{\abs{b^+(x)-b^+\pr{\pi_{\partial K}(x)}}}{\delta_K(x)}=\infty,$ the function $F_\zeta:\left[\frac{1}{\varepsilon_0},\infty\right)\longrightarrow [0,1]$ given by \[F_\zeta(r)=1-\frac{\zeta\pr{\varepsilon_0}}{\zeta\pr{\frac{1}{r}}}\textnormal{, for all }r\in \left[\frac{1}{\varepsilon_0},\infty\right),
\] is the cumulative distribution function of some random variable (one may, eventually, want to extend $F_\zeta$ by asking it to be null for arguments not exceeding $\frac{1}{\varepsilon_0}$). We let $F_\zeta^{-1}$ denote the usual generalized inverse i.e. \[F_\zeta^{-1}:[0,1]\longrightarrow\left[\frac{1}{\varepsilon_0},\infty\right),\ F_\zeta^{-1}(p):=\inf\set{r:r\in\left[\frac{1}{\varepsilon_0},\infty\right), F_\zeta(r)\geq p}\textnormal{, for all }p\in [0,1].\]
\begin{proposition}\label{PropMainNec}
If $b$ is continuous and $\mathring{K}$ is invariant with respect to $\ref{EqB}$ then either 
\begin{itemize}
\item[1.] the function $\zeta$ is bounded i.e. \[\liminf_{x\in\mathring{K};\ \delta_K(x)\rightarrow 0}\frac{\abs{b^+(x)-b^+\pr{\pi_{\partial K}(x)}}}{\abs{x-\pi_{\partial K}(x)}}<\infty \ \pr{ \textnormal{or, equivalently, } \liminf_{x\in\mathring{K};\ \delta_K(x)\rightarrow 0}\frac{b^+(x)}{\delta_K(x)}<\infty},\] \textnormal{or}
\item[2.] the inverse $F_\zeta^{-1}$ grows very fastly to $\infty$ as its argument grows to $1$ i.e. \begin{equation}
\label{AssEta}
\inf_{\beta>1}\limsup_{\delta\rightarrow 0}\delta\pp{-\ln{\delta}}^\beta \ln{F_\zeta^{-1}\pr{1-\delta}}=\infty.
\end{equation}
\end{itemize} 
\end{proposition}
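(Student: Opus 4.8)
The plan is to argue by contradiction: assume $\mathring K$ is invariant but \emph{both} alternatives fail, i.e. $\zeta(\varepsilon)\to\infty$ as $\varepsilon\to0$ (so $F_\zeta$ is genuinely a c.d.f. and $F_\zeta^{-1}$ is well defined) \emph{and} condition \eqref{AssEta} fails, meaning there is some $\beta>1$ with $\limsup_{\delta\to0}\delta[-\ln\delta]^\beta\ln F_\zeta^{-1}(1-\delta)<\infty$. From this last bound I want to extract a quantitative lower estimate on the outward velocity $b^+$ in a one-sided neighbourhood of $\partial K$, of the form $b^+(x)\gtrsim \delta_K(x)\,\psi(\delta_K(x))$ for a modulus $\psi$ that blows up fast enough near $0$ (roughly $\psi(\delta)$ of order $\exp\bigl(c/(\delta[-\ln\delta]^\beta)\bigr)$ after inverting the relation $F_\zeta(r)=1-\zeta(\varepsilon_0)/\zeta(1/r)$ and reading off $\zeta(1/r)$ from $F_\zeta^{-1}$). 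Then I would build a test trajectory: pick a point $x_0\in\mathring K$ very close to $\partial K$, look at its projection $\bar x=\pi_{\partial K}(x_0)$, and compare the true flow with the radial (one-dimensional) ODE obtained by projecting $b$ onto $\nu_K(\bar x)$. Because $\delta_K$ is $C^{2,1}$ near $\partial K$ (Proposition \ref{PropToolsK}), the chain rule gives $\frac{d}{dt}\delta_K(X_t^{x_0})=\scal{b(X_t^{x_0}),D\delta_K(X_t^{x_0})}=-\scal{b(X_t^{x_0}),\nu_K(\pi_{\partial K}(X_t^{x_0}))}$, so along the trajectory $\frac{d}{dt}\delta_K(X_t^{x_0})\le -b^+(X_t^{x_0})+ (\text{the part of }b\text{ tangent/inward, which is }\le 0\text{ when }b^+\text{ is the positive part})$; more carefully, $-\frac{d}{dt}\delta_K = \scal{b,\nu_K(\pi_{\partial K})}$ and when this is positive it equals $b^+$, so $\delta_K$ is strictly decreasing precisely where $b^+>0$.

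The heart of the argument is then an Osgood-type (generalized Gronwall) comparison for the scalar quantity $\rho(t):=\delta_K(X_t^{x_0})$. As long as the trajectory stays in $K_{\varepsilon_0}$ and in the region where $\scal{b,\nu_K(\pi_{\partial K})}>0$, we have $\rho'(t)\le -b^+(X_t^{x_0})\le -c\,\rho(t)\,\psi(\rho(t))$ using the lower bound on $b^+$ and $b^+(\pi_{\partial K}(x))$; here I must also control the contribution of $b^+$ at the foot point, but either it is also bounded below by the same mechanism, or one restricts attention to a connected component of $\partial K$ where $b^+\circ\pi_{\partial K}$ is small (this is exactly the role of Remark \ref{RemZetaConnect}, and of the $|b^+(x)-b^+(\pi_{\partial K}(x))|$ rather than $|b^+(x)|$ in the definition of $\zeta$). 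Dividing and integrating, $\int_{\rho(t)}^{\rho(0)}\frac{ds}{s\,\psi(s)}\ge c\,t$. The failure of \eqref{AssEta} is designed precisely so that $\int_{0^+}\frac{ds}{s\,\psi(s)}<\infty$: with $\psi(s)\sim\exp(c/(s[-\ln s]^\beta))$ one substitutes $s=e^{-1/\tau}$ (so $ds/s=d\tau/\tau^2$, $-\ln s = 1/\tau$) and the integrand near $s=0$ behaves like $\tau^{-2}\exp(-c\tau^{\beta-1})$, which is integrable at $\tau=0^+$ since $\beta>1$. Hence $\rho(t)$ reaches $0$ in finite time, i.e. the trajectory hits $\partial K$, contradicting invariance of $\mathring K$. (One must check that the trajectory does not leave the good region $K_{\varepsilon_0}\cap\{\scal{b,\nu_K(\pi_{\partial K})}>0\}$ before $\rho$ hits $0$; but on that good region $\rho$ is monotone decreasing, so starting close enough to $\partial K$ keeps $\rho\le\varepsilon_0$, and where $\scal{b,\nu_K(\pi_{\partial K})}\le0$ we have $\rho'\le0$ anyway, so $\rho$ never increases — the only subtlety is ruling out $\rho$ stabilising at a positive value, which is excluded because on $\{\rho>0\}\cap\{b^+>0\text{-lower-bound region}\}$ the decay is strict, and $\zeta(\varepsilon)\to\infty$ forces the lower-bound region to be all of a neighbourhood of $\partial K$).

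The main obstacle I anticipate is the bookkeeping that turns the hypothesis on $F_\zeta^{-1}$ into the clean differential inequality $b^+(X_t)\ge c\,\delta_K(X_t)\,\psi(\delta_K(X_t))$ uniformly along the trajectory: $\zeta(\varepsilon)$ is only an \emph{infimum} over the shell $\{\delta_K\le\varepsilon\}$, so it controls $|b^+(x)-b^+(\pi_{\partial K}(x))|/\delta_K(x)$ from below by $\zeta(\delta_K(x))$ only in the sense that this ratio is $\ge\zeta(\varepsilon)$ for every $x$ in the shell of radius $\varepsilon$ — which is in fact exactly what is needed, since $\delta_K(X_t)$ is decreasing so the relevant shell only shrinks. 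Converting $\zeta(1/r)$ and the definition $F_\zeta(r)=1-\zeta(\varepsilon_0)/\zeta(1/r)$ into $F_\zeta^{-1}$, and then matching the growth condition \eqref{AssEta} against the convergence/divergence of $\int_{0^+}ds/(s\psi(s))$, is where the argument is technically delicate and where the precise $[-\ln\delta]^\beta$ weight (as opposed to, say, a single logarithm) is forced — this also explains why, per the abstract's remark about moduli "no worse than $1/(\norm{x-y}\log\frac1{\norm{x-y}})$", the borderline case is a logarithm and the Osgood integral is the right dividing line. The use of $\liminf$ rather than $\limsup$ in alternative 1 is natural here: we only need \emph{one} sequence of times/points along which $\rho'$ is not too negative to avoid finite-time extinction, which is a $\liminf$ statement; Example \ref{ExpLiminfLimSup} shows this cannot be improved.
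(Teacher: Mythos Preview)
Your overall architecture is the same as the paper's: argue by contradiction, derive a differential inequality for $\rho(t)=\delta_K(X_t)$ from the lower bound encoded by $\zeta$, and show finite extinction time. But two points deserve correction.

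First, the handling of $b^+(\pi_{\partial K}(x))$ should not be left vague. The paper disposes of it in one line at the start: invariance of $\mathring K$ forces $b^+(\bar x)=0$ for every $\bar x\in\partial K$ (otherwise, by continuity, $b^+>\delta_0$ on a ball around $\bar x$, and the radial ODE drives any nearby interior point onto $\partial K$ in time $\le\norm{x-\bar x}/\delta_0$). Once this is established, $|b^+(x)-b^+(\pi_{\partial K}(x))|=b^+(x)$ and your bookkeeping worry disappears.

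Second, and more seriously, your continuous Osgood computation is wrong as written. You claim $\psi(s)\sim\exp(c/(s[-\ln s]^\beta))$, but that is the bound on $F_\zeta^{-1}(1-s)$, not on $\zeta(s)$; the two are linked through $F_\zeta(r)=1-\zeta(\varepsilon_0)/\zeta(1/r)$ and inverting this relation does not yield your $\psi$. And your final integrability check fails on its own terms: with $\beta>1$ one has $\tau^{\beta-1}\to 0$ as $\tau\to 0^+$, so $\tau^{-2}\exp(-c\tau^{\beta-1})\sim\tau^{-2}$, which is \emph{not} integrable near $0$.

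The paper avoids this inversion entirely by discretizing. It sets $\varepsilon^n:=\sup\{\varepsilon:\zeta(\varepsilon)\ge\max\{n,n\zeta(\varepsilon_0)\}\}$, so that on the shell $\{\delta_K\le\varepsilon^n\}$ one has the linear bound $b^+(x)\ge n\,\delta_K(x)$ and hence, by ordinary Gronwall, the crossing time from level $\varepsilon^n$ down to $\varepsilon^{n+1}$ is at most $(\ln\varepsilon^n-\ln\varepsilon^{n+1})/n$. Summing by parts gives $t_\infty\le\sum_{k\ge n}\frac{-\ln\varepsilon^{k+1}}{k(k+1)}$, and now the failure of \eqref{AssEta} enters directly through $1/\varepsilon^{k+1}\le F_\zeta^{-1}(1-1/(k+1))$, yielding $\frac{-\ln\varepsilon^{k+1}}{k(k+1)}\le\frac{C}{k(\ln k)^\beta}$, a convergent Bertrand series. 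This discrete route is exactly the correct discretization of your Osgood integral $\int_{0^+}\frac{ds}{s\zeta(s)}$ (on $[\varepsilon^{n+1},\varepsilon^n]$ one has $\zeta(s)\ge n$, giving the same bound), but it bypasses the need to express $\zeta(s)$ in closed form and makes the role of the $(-\ln\delta)^\beta$ weight transparent.
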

We postpone the proof of this assertion to Section \ref{SectionProofs}. The simple underlying idea is that if $b^+$ fails to be locally Lipschitz (i.e. $\zeta$ is unbounded) and provided the continuity modulus  is smooth enough at $0+$ ((\ref{AssEta}) fails to hold), then, for any finite time horizon $T>0$, by starting close enough to $\partial K$, the trajectory reaches $\partial K$ prior to $T$.\\
Let us comment on the qualitative interpretation of our result.
\begin{remark}
\begin{itemize}
\item[i.] If the coefficient function is Lipschitz continuous, $\zeta$ is bounded by the Lipschitz constant and the result is trivial.
\item[ii.] The condition 2. (or negation of) should be seen as a regularity assumption on the continuity modulus of $b^+$. For large continuity moduli the condition 2.  is never satisfied. For such systems, invariance of open sets implies the local Lipschitz-like condition in 1. \\
(a) Let us consider, as example, the case when $b^+$ is (supra) Hölder-continuous function (say, with Hölder coefficient $\kappa<1$), i.e., locally (close to $\partial K$) $\abs{b^+(x)-b^+(y)}\geq c\norm{x-y}^\kappa$. One easily computes (assuming $c=1$),
\begin{equation*}
\begin{split}
&\zeta(\varepsilon)\geq\varepsilon^{\kappa-1}, F_\zeta(r)\geq1-\zeta\pr{\varepsilon_0}r^{\kappa-1}, F_\zeta^{-1}(p)\leq\pr{\frac{1-p}{\zeta\pr{\varepsilon_0}}}^{\frac{1}{\kappa-1}}\\
&\limsup_{\delta\rightarrow 0}\delta\pp{-\ln{\delta}}^\beta\ln\pr{ F_\zeta^{-1}\pr{1-\delta}}=\limsup_{\delta\rightarrow 0}\delta\pp{-\ln{\delta}}^{\beta+1}=0,\textnormal{ for all }\beta>1.
\end{split}
\end{equation*}
(b) Another example is provided by functions of type $b(x)=x\pr{\ln\frac{1}{x}}^\alpha \times \mathbbm{1}_{x>0}+0\times \mathbbm{1}_{x=0}$, for some $\alpha>1$. One computes  
\begin{equation}
\label{LimitExp}
\begin{split}
&\zeta(\varepsilon)=\pr{\ln\frac{1}{\varepsilon}}^\alpha, F_\zeta(r)=1-\frac{\zeta\pr{\varepsilon_0}}{\pr{\ln r}^\alpha}, F_\zeta^{-1}(p)=e^{\pr{\frac{\zeta\pr{\varepsilon_0}}{1-p}}^\frac{1}{\alpha}}\\
&\limsup_{\delta\rightarrow 0}\delta\pp{-\ln{\delta}}^\beta\ln\pr{ F_\zeta^{-1}\pr{1-\delta}}=\limsup_{\delta\rightarrow 0}\delta^{1-\frac{1}{\alpha}}\pp{-\ln{\delta}}^{\beta}<\infty,\textnormal{ for all }\beta>1.
\end{split}
\end{equation}
Finally, the condition (\ref{AssEta}) is valid as soon as $\alpha\leq 1$. Our local Lipschitz-condition can no longer be guaranteed (at least with our proof !) for better continuity moduli  (see also Example \ref{Exp_xlnx}).
\item[iii.] Sharper dichotomy can be achieved by replacing, in the definition of $\zeta$, the denominator $\delta_K(x)$ with $\Phi\pr{\delta_K(x)}$.  In this case, the estimates for $\frac{1}{\varepsilon^j}$ in the proof of Proposition \ref{PropMainNec} will no longer be based on Gronwall-type inequality but on generalizations. The reader should get a good idea of conditions on $\Phi$ by recalling Osgood-type uniqueness or taking a look at \cite{LaSalle1949}.
\item[iv.] In the Brownian diffusion case, the same proof holds true by replacing $\scal{b(x),\nu_K\pr{\pi_{\partial K}(x)}}$ with $-\mathcal{L}\delta_K(x)$, where $\mathcal{L}$ is the infinitesimal generator associated to the diffusion. Of course, further assumptions on the continuity moduli should be present in order to guarantee existence of a continuous solution(e.g. \cite{FangZhang2003}). 
\end{itemize}
\end{remark}
\subsection{Some Further (Counter)Examples}
In this subsection, we will give some comments (via explicit examples) on (quasi) optimality of our conditions. First, we emphasize the fact that the local Lipschitz-like condition $\liminf_{\varepsilon\rightarrow 0} \zeta(\varepsilon)$ cannot (in all generality) be replaced with $\limsup_{\varepsilon\rightarrow 0}$. Second, we point out the (quasi) optimality of the regularity assumption (i.e. in order for invariance to imply local Lipschitz-like condition, (\ref{AssEta}) should fail to hold). We present an example for which the continuity modulus is very slow (translating into $F_\zeta^{-1}(1-p)$ of type $\frac{1}{p\ln\frac{1}{p}}$, thus implying (\ref{AssEta})) and invariance for an open set holds true in the absence of a local Lipschitz-like condition ($\liminf_{\varepsilon\rightarrow 0} \zeta(\varepsilon)<\infty$). Finally, we present an example showing that the construction of $\zeta$ has to be at least on some neighborhood of points $x\in\partial K$ and it cannot be replaced with $\liminf_{x\in \mathring{K}; x\rightarrow \bar{x}}\frac{b^+(x)}{\delta_K(x)}$.
\begin{example}
\label{ExpLiminfLimSup}
Let us consider the following (one-dimensional) coefficient function
\[
b(x):=\left\lbrace
\begin{split}
-\left\lvert \sqrt{x} \sin\pr{\frac{1}{x}} \right\rvert,&\textnormal{ if }0<x \leq 1,\\
0,&\textnormal{ if }x=0.
\end{split}
\right.
\]
It is obvious that $\pr{0,1}$ is invariant. Indeed, if $x\in\pr{0,1}$, then there exists $k\in \mathbb{N}$ such that $\frac{1}{(k+1)\pi}\leq x$. Then $X_t^x\in \pp{\frac{1}{(k+1)\pi},x}$ which implies invariance. However, $\limsup_{x\rightarrow 0+}\frac{b^+(x)}{\abs{x}}=\limsup_{x\rightarrow 0+}\frac{\abs{\sin\frac{1}{x}}}{\sqrt{x}}=\infty$. Therefore, the local Lipschitz-like condition $\liminf_{x\rightarrow 0+}\frac{b^+(x)}{\abs{x}}<\infty$ holds but $\liminf$ cannot be replaced with $\limsup$.
\end{example}
Let us now show that this condition on the continuity modulus (with $F_\zeta^{-1}(1-p)$ of type $\frac{1}{p\ln\frac{1}{p}}$) is nearly optimal in order to get local Lipschitz-like behavior.
\begin{example}
\label{Exp_xlnx}
We consider the set $K=[0,1]$ and the coefficient function $b(y)=-\frac{y}{b^0(y)}$, if $y>0$ (and $b(0)=0$), where  $b^0(y)$ is the unique solution of the equation\[\pr{b^0(y})^{b^0(y)}=e^{\frac{1}{\ln y}}.\] It is clear that $b^0$ is increasing, $lim_{y\rightarrow 0}b^0(y)=0$ and $b$ is continuous. 
One notes that $b$ is decreasing. Indeed, by changing the variable $t:=b^0(y)$, one gets a function $\tilde{b}(t):=-\frac{e^{\frac{1}{t\ln t}}}{t}$ whose derivative is negative (for $t$ small enough). 
If $x=n^{-1}$ (small enough) is the initial datum of our equation driven by $b$, then $X^{x}_t> \frac{1}{n+1}$ for all $t< \frac{1}{n+1}b^0\pr{\frac{1}{n}}$. Hence, the time $t_0^n$ to reach $0$ starting from $x=n^{-1}$ satisfies the lower bound
\[t_0^n\geq \sum_{k\geq n} \frac{1}{k+1}b^0\pr{\frac{1}{k}}>\sum_{k\geq n} \frac{1}{(k+1)\ln k},\] where we have used $b^0(y)> \frac{-1}{\ln y}$ which is obtained using the monotonicity of $b^0$ and the fact that 
\[ \frac{-1}{\ln y}\ln \pr{  \frac{-1}{\ln y}}< \frac{1}{\ln y}.\]Therefore, since $t_0^n=\infty$, for every $n\geq 1$, the set $(0,1)$ is invariant ($X_t^x$ decreasing in time).\\
On the other hand, standard computations (around $0$ for the quantities $\zeta,\ F_\zeta,\ F_\zeta^{-1}$) yield \[\begin{split} &\zeta(\varepsilon)=\inf_{y\leq \varepsilon}\frac{1}{b^0(y)}=\frac{1}{b^0(\varepsilon)},\ F_\zeta(r)=1-\frac{b^0\pr{r^{-1}}}{b^0\pr{\varepsilon_0}},\\ &\ln F_\zeta^{-1}(1-p)=\frac{-1}{b^0\pr{\varepsilon_0}p\ln\pr{b^0\pr{\varepsilon_0}p}}, \lim_{p\rightarrow 0}p\pr{\ln p}^\beta \ln F_\zeta^{-1}(1-p)=\infty,\ \forall \beta>1.\end{split}\] 
\end{example}
Finally, a careful look at the proof of Proposition \ref{PropMainNec} (see Section \ref{SectionProofs}) shows that $\zeta$ does not need to be constructed globally but only with respect to neighborhoods of connected components of $\partial K$. This is obvious enough especially when one tries to apply the proof to a one-dimensional example $K=\pp{\alpha_1,\alpha_2}$. Then one can reason either on a neighborhood of $\alpha_1$ or with respect to $\alpha_2$.
Since $K$ is compact, whenever $\liminf_{\mathring{K}\ni x,\ \delta_K\pr{x}\rightarrow 0}\frac{b^+(x)}{\delta_K\pr{x}}<\infty$, then there exists at least one $\bar{x}\in \partial{K}$ such that $\liminf_{\mathring{K}\ni x\rightarrow \bar{x}}\frac{b^+(x)}{\delta_K\pr{x}}<\infty$. One might be tempted to ask for the stronger condition : 
for every $\bar{x}\in \partial{K}$, one has \[\liminf_{\mathring{K}\ni x\rightarrow \bar{x}}\frac{b^+(x)}{\abs{x-\bar{x}}}<\infty.\] However, one can construct an example in which the (open) unit ball in $\mathbb{R}^2$ is invariant and, yet, the previous condition does not hold for some $\bar{x}$ such that $\norm{\bar{x}}=1$. 
\begin{example}
\label{ExpPolar}
We describe the dynamics in polar coordinates $(\rho,\theta)$ such that
\[\left\lbrace \begin{split}d\rho_t&=\sqrt{1-\rho_t}\pr{\theta_t}^+\pr{\frac{\pi}{2}-\theta_t}^+dt\\
d\theta_t&=\frac{1}{1-\rho_t}\pr{\theta_t}^+\pr{\frac{\pi}{2}-\theta_t}^+dt. \end{split}\right.\] As usual, $\theta^+=\max\set{\theta,0}$.
First, it is clear that $b^+\pr{\rho\cos(\theta),\rho\sin(\theta)}=\dot{\rho}=\sqrt{1-\rho}\pr{\theta_t}^+\pr{\frac{\pi}{2}-\theta_t}^+$ and, thus, \[\liminf_{\pr{x,y}\rightarrow\pr{\frac{\sqrt{2}}{2},\frac{\sqrt{2}}{2}}}\frac{b^+\pr{x,y}}{1-\sqrt{x^2+y^2}}=\liminf_{\rho\rightarrow 1}\frac{\pi^2}{16\sqrt{1-\rho}}=\infty.\]
Second, starting from some point $\pr{\rho_0,\theta_0}\in [0,1)\times \pr{0,\frac{\pi}{2}}$, let us assume that the trajectory reaches the unitary circle at some time $t_{min}$. Prior to $t_{min}$, both $\rho$ and $\theta$ are non decreasing. Let us assume that $\theta_t<\frac{\pi}{2}$ for $t<t_{min}$. Then, we get the equality $\dot{\theta}=\frac{\dot{\rho}}{\pr{1-\rho}^{\frac{3}{2}}}$ for $0\leq t<t_{min}$ which gives\[\frac{\pi}{2}>\theta_t=\theta_0-2\pr{1-\rho_0}^{-\frac{1}{2}}+2\pr{1-\rho_t}^{-\frac{1}{2}}.\] This implies that $\rho_t<1-\pr{\frac{\pi}{4}+\pr{1-\rho_0}^{-\frac{1}{2}}}^{-2}$ and, thus, the viability of $\left\lbrace(x,y):x^2+y^2<1\right\rbrace$ follows. \\
Finally, we wish to point out that, although $b$ is not (globally) bounded (because of the presence of the term $\frac{1}{1-\rho}$ as $\rho$ is close to $1$), the previous upper estimate on $\rho_t$ shows that for each initial data (in $\left\lbrace(x,y):x^2+y^2<1\right\rbrace$), one actually deals with a bounded coefficient.
\end{example}

\section{Border Avoidance for Controlled Diffusions}\label{Section_NearViabDiff}
We consider a probability space $\pr{\Omega,\pr{\mathcal{F}_t}_{t\geq 0},\mathbb{P}}$ endowed with a filtration satisfying the usual assumptions on right-continuity and completeness. On this space, we consider the $d$-dimensional Brownian motion $W$. We will deal with the near-viability of Brownian-driven systems\begin{equation}
\label{CtrlSDE0}
\left\lbrace 
\begin{split} 
dX_t^{x,u}&=b\pr{X_t^{x,u},u(t)}dt+\sigma\pr{X_t^{x,u},u(t)}dW_t,\textnormal{ for }t\geq0,\\
X_0^{x,u}&=x\in \mathbb{R}^N.
\end{split}
\right.
\end{equation}
From now on, unless stated otherwise, we assume that the coefficients $b$ and $\sigma$ are uniformly continuous on $\mathbb{R}^N\times U$ and (with the notation (\ref{HölderNorms}), see also the following lines on vectorial notations) $\norm{b}_1+\norm{\sigma}_1<\infty$ (i.e. $b$ and $\sigma$ are bounded and Lipschitz-continuous in space). The space of admissible control processes will be denoted by $\mathcal{U}$ and it consists of progressively measurable $U$-valued processes $u$. Under these assumptions, existence and uniqueness of a (strong) controlled solution to (\ref{CtrlSDE0}) is standard.\\
For $\delta>0$ and progressively measurable controls $e$ taking their values in unit ball of $\mathbb{R}^N$ (space denoted by $\mathcal{E}$), respectively $u\in\mathcal{U}$, we consider the augmented control system (appearing in Krylov's method of shaking the coefficients cf. \cite{Krylov_00}).
\begin{equation}
dX_t^{x,u,e,\delta}=b\pr{X_t^{x,u,e,\delta}+\delta^2e_t,u_t}dt+\sigma\pr{X_t^{x,u,e,\delta}+\delta^2e_t,u_t} dW_t,\textnormal { for } t\geq 0,\ X_0^{x,u,e,\delta}=x\in \mathbb{R}^N.
\end{equation}
We begin with recalling some standard estimates result (see \cite[Page 11]{Krylov_00} for the second assertion).
\begin{proposition}
\label{PropKrylov}
There exists a constant $\lambda_0$ depending only on $\norm{b}_1$ and $\norm{\sigma}_1$ such that
\begin{equation}
\begin{split}
&\textnormal{i. } \mathbb{E}\pp{\sup_{t\in [0,T]}\norm{X_t^{x,u}}^2dt}\leq \lambda_0e^{\lambda_0T}\pr{1+\norm{x}^2},\\
&\textnormal{ii. } \mathbb{E}\pp{\sup_{t\in [0,T]}\norm{X_t^{x,u,e,\delta}-X_t^{x,u}}}\leq\lambda_0e^{\lambda_0T}\delta  \textnormal { and}\\
&\textnormal{iii. } \mathbb{E}\pp{\sup_{t\in [0,T]}\norm{X_t^{x,u,e,\delta}-X_t^{y,u,e,\delta}}}\leq\lambda_0e^{\lambda_0T}\norm{x-y},
\end{split}
\end{equation}for all $\delta>0, x\in \mathbb{R}^N,y\in \mathbb{R}^N$ and all control processes $(u,e)\in \mathcal{U}\times\mathcal{E}$  i.e progressively measurable processes $u$ taking their values in $U$ and $e$ taking their values in the unit ball of $\mathbb{R}^N$.
\end{proposition}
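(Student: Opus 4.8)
The plan is to establish all three inequalities by the same elementary scheme: write each trajectory in its integral form, take a supremum in time, apply a maximal inequality (Doob / Burkholder--Davis--Gundy) to the stochastic integral, use boundedness of $b,\sigma$ for i. and Lipschitz continuity of $b,\sigma$ for ii. and iii., and close with Gronwall's lemma. Because $b$ and $\sigma$ are globally Lipschitz and bounded \emph{uniformly in the control variable}, every constant produced along the way can be tracked to depend only on $\norm{b}_1$ and $\norm{\sigma}_1$, never on $T,x,y,\delta,u$ or $e$, which is exactly what yields the stated form with a single $\lambda_0$.

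For i., I would start from $X_t^{x,u}=x+\int_0^t b\pr{X_s^{x,u},u_s}\,ds+\int_0^t\sigma\pr{X_s^{x,u},u_s}\,dW_s$, use $\pr{a_1+a_2+a_3}^2\le 3\pr{a_1^2+a_2^2+a_3^2}$, then take $\sup_{t\le T}$ and $\mathbb{E}$. The drift term is controlled by Cauchy--Schwarz, $\mathbb{E}\pp{\sup_{t\le T}\norm{\int_0^t b\,ds}^2}\le T\,\mathbb{E}\pp{\int_0^T\norm{b\pr{X_s^{x,u},u_s}}^2\,ds}$, and the stochastic term by Doob's $L^2$ maximal inequality, $\mathbb{E}\pp{\sup_{t\le T}\norm{\int_0^t\sigma\,dW}^2}\le 4\,\mathbb{E}\pp{\int_0^T Tr\pp{\sigma\sigma^*}\pr{X_s^{x,u},u_s}\,ds}$. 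Boundedness of $b$ and $\sigma$ gives $\norm{b(z,u)}^2+Tr\pp{\sigma\sigma^*}\pr{z,u}\le C\pr{1+\norm{z}^2}$ with $C$ governed by $\norm{b}_1,\norm{\sigma}_1$, so writing $\phi(T):=\mathbb{E}\pp{\sup_{t\le T}\norm{X_t^{x,u}}^2}$ one reaches $\phi(T)\le C'\pr{1+\norm{x}^2}\pr{1+T}+C'\int_0^T\phi(s)\,ds$, and Gronwall's lemma closes i. after absorbing $1+T$ into the exponential.

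For ii. and iii. I would run one argument with $Z_t:=X_t^{x,u,e,\delta}-X_t^{x,u}$ (case ii.), resp. $Z_t:=X_t^{x,u,e,\delta}-X_t^{y,u,e,\delta}$ (case iii.). In both cases $Z_t=Z_0+\int_0^t\Delta b_s\,ds+\int_0^t\Delta\sigma_s\,dW_s$ with $Z_0=0$ (resp. $Z_0=x-y$), and Lipschitz continuity of $b$ gives $\norm{\Delta b_s}\le\pp{b}_1\pr{\norm{Z_s}+\delta^2}$ in case ii. (using $\norm{e_s}\le1$) and $\norm{\Delta b_s}\le\pp{b}_1\norm{Z_s}$ in case iii. (the shift $\delta^2 e_s$ cancels), similarly for $\Delta\sigma_s$ with constant $\pp{\sigma}_1$. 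Taking $\sup_{t\le T}$ and $\mathbb{E}$, the drift contributes at most $\pp{b}_1\int_0^T\pr{\mathbb{E}\pp{\sup_{r\le s}\norm{Z_r}}+\delta^2}\,ds$ (no $\delta^2$ in iii.); for the stochastic integral I would use the $L^1$-version of BDG, $\mathbb{E}\pp{\sup_{t\le T}\norm{\int_0^t\Delta\sigma\,dW}}\le c\,\mathbb{E}\pp{\pr{\int_0^T\norm{\Delta\sigma_s}^2\,ds}^{1/2}}$, bound the integrand by $\pp{\sigma}_1\pr{\sup_{s\le T}\norm{Z_s}}^{1/2}\pr{\int_0^T\norm{Z_s}\,ds}^{1/2}$ up to a term of order $\delta^2$, and split this product via $pq\le\tfrac12\pr{\eta p^2+\eta^{-1}q^2}$ with $\eta$ small enough that the resulting $\sup_{s\le T}\norm{Z_s}$-term is absorbed into the left-hand side. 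One is left with $\psi(T):=\mathbb{E}\pp{\sup_{t\le T}\norm{Z_t}}\le C\pr{\norm{Z_0}+\delta^2}+C\int_0^T\psi(s)\,ds$, and Gronwall's lemma gives ii. and iii.; note the $\delta^2$-perturbation actually produces a bound of order $\delta^2$, a fortiori of order $\delta$ on the range $\delta\in(0,1]$ relevant to Krylov's method.

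I do not anticipate any genuine difficulty. The only point requiring care is the square-root term arising from the $L^1$-BDG inequality in ii./iii.: it is handled by the splitting/absorption step above, or alternatively by first establishing the corresponding $L^2$-bound for $\mathbb{E}\pp{\sup_{t\le T}\norm{Z_t}^2}$ (where Doob's inequality applies directly) and then descending to the first moment by Cauchy--Schwarz. The remaining work is bookkeeping: checking that all constants from BDG, Cauchy--Schwarz and Gronwall depend solely on $\norm{b}_1$ and $\norm{\sigma}_1$, which is immediate from the global Lipschitz and boundedness hypotheses being uniform in $u\in U$.
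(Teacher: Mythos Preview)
Your approach is correct and is precisely the standard route for such moment estimates. Note, however, that the paper does not actually supply its own proof of this proposition: it is stated as a recalled ``standard estimates result'' with a pointer to \cite[Page 11]{Krylov_00} for the second assertion, and no argument is given in Section~\ref{SectionProofs}. So there is nothing to compare against beyond the fact that your outline is the expected Gronwall--BDG scheme underlying Krylov's estimates. Your observation that the perturbation $\delta^2 e_t$ naturally yields a bound of order $\delta^2$ rather than $\delta$ is accurate; the paper's stated $\delta$ is simply a (harmless) weakening on the range of interest.
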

\begin{remark}
\label{RemLipschitzOsgood}
These estimates essentially follow from Gronwall's inequality (hence the Lipschitz-type assumptions). Alternatively, one can hope to improve the assumptions by asking a Osgood-type condition of (non-)ntegrability for the continuity moduli or generalizations (e.g. \cite{LaSalle1949}), thus allowing use of non-Lipschitz coefficients. In this framework, a careful look at \cite[Proof of Theorem D]{FangZhang2005} shows that near-viability up to a fixed finite horizon $T>0$ is more adequate.
\end{remark}
Second, we introduce the notations 
\begin{equation}
\label{csigmacL}
\left\lbrace
\begin{split}
&\sup_{u\in U, x\in \mathring{K} \ s.t.\  \delta_K(x)\leq \varepsilon_0}\frac{\norm{\sigma(x,u)-\sigma(\pi_{\partial K}(x),u)}}{\delta_K(x)}=:c_\sigma,\\
&\sup_{u\in U, x\in \mathring{K} \ s.t.\  \delta_K(x)\leq \varepsilon_0}\frac{\abs{\mathcal{L}^u\delta_K(x)-\mathcal{L}^u\delta_K\pr{\pi_{\partial K}(x)}}}{\delta_K(x)}=:c_\mathcal{L},
\end{split}
\right.
\end{equation}
where, as usual, $\mathcal{L}^u\varphi(x):=\frac{1}{2}Tr\pp{\sigma(x,u)\sigma^*(x,u)D^2\varphi(x)+\scal{b(x,u),D\varphi(x)}}$ for all $x\in\mathbb{R}^N,\ u\in U$ and for regular twice differentiable functions $\varphi\in C^2\pr{\mathbb{R}^N}$.
\begin{remark}
\label{RemarkcscL}
In our Lipschitz framework, both constants $c_\sigma$ and $c_\mathcal{L}$ are upper-bounded by a (generic) constant that only depends on $\norm{b}_1$ and $\norm{\sigma}_1$. However, most of our proofs can be generalized to non-Lipschitz settings (provided further assumptions, see Remark \ref{RemGenNonLipsch}). In this case, the conditions (\ref{csigmacL}) are to be regarded as a uniform-in-control upper limit condition much like the necessary one for invariance (cf. Proposition \ref{PropMainNec} 1.). One should ask for \begin{equation}
\label{csigmacL2}
\left\lbrace\begin{split}\limsup_{\delta_K(x)\rightarrow 0}\sup_{u\in U}\frac{\norm{\sigma(x,u)-\sigma(\pi_{\partial K}(x),u)}}{\delta_K(x)}<\infty,\\
\limsup_{\delta_K(x)\rightarrow 0}\sup_{u\in U}\frac{\abs{\mathcal{L}^u\delta_K(x)-\mathcal{L}^u\delta_K\pr{\pi_{\partial K}(x)}}}{\delta_K(x)}<\infty,
\end{split}
\right.
\end{equation}
Furthermore, the second inequality is only needed to guarantee \[\mathcal{L}^u\delta_K(x)-\mathcal{L}^u\delta_K\pr{\pi_{\partial K}(x)}\geq -c_\mathcal{L}\delta_K(x)\] and only when $\mathcal{L}^u\delta_K(x)\leq 0$. Hence, one can actually use only $\pr{-\mathcal{L}^u\delta_K(x)}^+$ as in the necessary condition.
\end{remark}
From now on, we will denote by $\lambda$ a large enough constant such that\begin{equation}
\label{Ass_lambda}
\lambda>2\lambda_0+c_\sigma^2+ c_\mathcal{L}.
\end{equation}
\begin{remark} For further developments, we will need further details on how large the constant $\lambda$ should be chosen. Since $K$ is compact, its diameter $diam(K):=\sup\set{\norm{x-y}:(x,y)\in K\times K}$ is finite. Classical estimates on solutions of SDEs yield (due to assumptions on coefficients), the existence of a constant $k$ such that, for all initial data $x\in \mathbb{R}^N$, any admissible control $u$ and any $t>0$, \[\mathbb{E}\pp{\sup_{0\leq s\leq t}\norm{X_s^{x,u}-x}^2}\leq kt.\]It follows that \[\mathbb{P}\pr{\sup_{0\leq s\leq t}\norm{X_s^{x,u}-x}\geq\frac{\varepsilon_0}{2}}\leq \frac{4k}{\varepsilon_0^2}t.\] One then picks $t^*=\frac{\varepsilon_0^3}{32k\times diam(K)}$ and $\lambda>0$ such that 
\begin{equation}
\label{Ineqlambda1}
\left\lbrace
\begin{split}
\mathbb{P}\pr{\sup_{0\leq s\leq t^*}\norm{X_s^{x,u}-x}\geq \frac{\varepsilon_0}{2}}&\leq \frac{\varepsilon_0}{8diam(K)};\\
e^{- \lambda t^*} \leq \frac{\varepsilon_0}{8diam(K)}.
\end{split}
\right.
\end{equation}
\end{remark} 
We recall the viability indicator value function $V:\mathbb{R}^N\longrightarrow\mathbb{R}_+$, given by \begin{equation}
V(x):=\inf_{u\in\mathcal{U}}\mathbb{E}^x\pp{\int_0^\infty e^{-\lambda t}\mathbbm{1}_{\pr{\mathring{K}}^c}\pr{X_t^{x,u}}dt},\textnormal{ for all }x\in \mathbb{R}^N.
\end{equation}
To prove the equivalence between the near-viability of the closed set $K$ and the near-viability property for $\mathring{K}$, one proceeds in three steps. First, we consider the Lipschitz-continuous sup-convoluted approximations of $\mathbbm{1}_{\pr{\mathring{K}}^c}$ (denoted by $f_n$ for $n\geq 1$) and the (Lipschitz-continuous, infinite-horizon, $\lambda$-discounted) associated value functions $V_n$. The first result (Proposition \ref{VnConvergeToV}) gives the pointwise convergence of the approximating functions. \\
Second, we prove (in Lemma \ref{LemmaReductionToKeps}) that, in order for $V$ to be null on $\mathring{K}$, one only needs to focus on (arbitrarily small) neighborhoods of the boundary $\partial K$. In particular, the study can be reduced to a set where the signed distance $\delta_K$ is smooth.\\
Third, we use Krylov's shaking of coefficients method to construct regular subsolutions ($V_n^{\frac{1}{n^2}}$) of the Hamilton-Jacobi-Bellman equation satisfied (in a viscosity sense) by $V_n$. By multiplying these functions with $\delta_K$, one gets a family of smooth functions $W_n:=-\delta_K\times V_n^{\frac{1}{n^2}}$. By "comparing" (at global minimum points) these functions with the 0-constant (subsolution) and passing to the limit as $n\rightarrow\infty$,  it follows that $V$ cannot be strictly positive at any point $x\in\mathring{K}$. This comparison uses extensively the (closed-set) viability characterization in \cite{BCQ2001} and the inequality \begin{equation}
\label{C0}
\mathcal{L}^u\delta_K(x)-\mathcal{L}^u\delta_K\pr{\pi_{\partial K}(x)}\geq -c_\mathcal{L}\delta_K(x).
\end{equation}
Let us now consider the (approximating) functions \[f_n(x):=\pr{1-nd_{\pr{\mathring{K}}^c}(x)}^+.\]
These functions form a non-increasing sequence of Lipschitz-continuous applications that converges to $f(x)=\mathbbm{1}_{\pr{\mathring{K}}^c}(x)$.
Therefore, for every $\lambda>0$, the value functions
\begin{equation}
\label{Vn}
V_n(x):=\inf_{u\in\mathcal{U}} \mathbb{E}^x\pp{\int_0^\infty e^{-\lambda t}f_n\pr{X_t^{x,u}}dt}, \textnormal{ for all }x\in \mathbb{R}^N
\end{equation}
satisfy $V_n\geq V$. In fact, we prove the following convergence.
\begin{proposition}
\label{VnConvergeToV}
The approximating functions $V_n$ converge (pointwise) to the (viability indicator) function $V$.
\end{proposition}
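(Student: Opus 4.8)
The plan is to show $V_n \downarrow V$ pointwise by exploiting that $f_n \downarrow f = \mathbbm{1}_{(\mathring{K})^c}$ monotonically, that each $f_n \ge f$ (hence $V_n \ge V$ and the sequence $(V_n)$ is non-increasing, so $\lim_n V_n =: \bar V \ge V$ exists), and then proving the reverse inequality $\bar V \le V$ by a careful control-dependent estimate. Fix $x \in \mathbb{R}^N$ and $\varepsilon > 0$. Pick a near-optimal control $u^\varepsilon \in \mathcal{U}$ for $V(x)$, i.e. $\mathbb{E}^x\big[\int_0^\infty e^{-\lambda t} f(X_t^{x,u^\varepsilon})\,dt\big] \le V(x) + \varepsilon$. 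Then for every $n$,
\begin{equation*}
V_n(x) \le \mathbb{E}^x\pp{\int_0^\infty e^{-\lambda t} f_n\pr{X_t^{x,u^\varepsilon}}\,dt} = \mathbb{E}^x\pp{\int_0^\infty e^{-\lambda t} f\pr{X_t^{x,u^\varepsilon}}\,dt} + \mathbb{E}^x\pp{\int_0^\infty e^{-\lambda t}\pr{f_n - f}\pr{X_t^{x,u^\varepsilon}}\,dt}.
\end{equation*}
The first term is at most $V(x)+\varepsilon$, uniformly in $n$; the whole matter reduces to showing the second (error) term tends to $0$ as $n \to \infty$, and this is the only real work.

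For the error term I would argue as follows. Since $0 \le f_n - f \le 1$ everywhere and $f_n - f$ vanishes outside a shrinking neighborhood of $\partial(\mathring{K})^c = \partial K$ — precisely, $f_n(y) - f(y) = 0$ unless $0 < d_{(\mathring{K})^c}(y) < 1/n$, i.e. unless $y$ lies in the open set $\mathring{K}$ at distance less than $1/n$ from its boundary — we get the bound
\begin{equation*}
\mathbb{E}^x\pp{\int_0^\infty e^{-\lambda t}\pr{f_n - f}\pr{X_t^{x,u^\varepsilon}}\,dt} \le \int_0^\infty e^{-\lambda t}\, \mathbb{P}\pr{0 < d_{(\mathring{K})^c}\pr{X_t^{x,u^\varepsilon}} < \tfrac1n}\,dt.
\end{equation*}
By dominated convergence (the integrand is bounded by $e^{-\lambda t}$, which is integrable on $\mathbb{R}_+$), it suffices to show that for (Lebesgue-)a.e. fixed $t > 0$,
\begin{equation*}
\mathbb{P}\pr{0 < d_{(\mathring{K})^c}\pr{X_t^{x,u^\varepsilon}} < \tfrac1n} \xrightarrow[n\to\infty]{} \mathbb{P}\pr{d_{(\mathring{K})^c}\pr{X_t^{x,u^\varepsilon}} = 0} = \mathbb{P}\pr{X_t^{x,u^\varepsilon} \in \partial K},
\end{equation*}
which is just continuity of measure along the decreasing sequence of events $\{0 < d_{(\mathring{K})^c}(X_t^{x,u^\varepsilon}) < 1/n\}$, whose intersection is $\{X_t^{x,u^\varepsilon}\in\partial K\}$. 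Hence the error term converges to $\int_0^\infty e^{-\lambda t}\,\mathbb{P}(X_t^{x,u^\varepsilon}\in\partial K)\,dt$, and I claim this last quantity is $0$ because, under the nondegeneracy/regularity hypotheses in force (or, failing that, simply because $\partial K$ is a Lebesgue-null $C^{2,1}$ hypersurface and the law of $X_t^{x,u^\varepsilon}$ is absolutely continuous, or one invokes that it has no atoms on $\partial K$), $\mathbb{P}(X_t^{x,u^\varepsilon}\in\partial K) = 0$ for a.e.\ $t$. Combining, $\limsup_n V_n(x) \le V(x) + \varepsilon$; letting $\varepsilon \downarrow 0$ gives $\bar V(x) \le V(x)$, and together with $\bar V \ge V$ this yields $\lim_n V_n(x) = V(x)$.

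The delicate point — and the one I would flag as the main obstacle — is precisely the claim that the boundary $\partial K$ carries no mass for the time-marginals of the near-optimal trajectory. If the diffusion matrix $\sigma\sigma^*$ is allowed to degenerate one cannot invoke absolute continuity of the law of $X_t^{x,u^\varepsilon}$; however, since $\partial K$ is a $C^{2,1}$ hypersurface (codimension one) and the occupation measure $\int_0^\infty e^{-\lambda t}\mathbb{E}[\mathbbm{1}_{\partial K}(X_t^{x,u^\varepsilon})]\,dt$ is what ultimately enters, the cleanest route is to bound the whole error term by the occupation time of the $1/n$-tube around $\partial K$ and use that these tubes decrease to $\partial K$ together with a (control-uniform) estimate — of the same flavour as the tube estimates in Remark following Proposition \ref{PropKrylov} — showing $\int_0^\infty e^{-\lambda t}\mathbb{E}[\mathbbm{1}_{\{d_{\partial K}(X_t^{x,u^\varepsilon}) < 1/n\}}]\,dt \to 0$. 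Alternatively, and this is in fact all that is needed downstream, one may observe that the monotone limit $\bar V$ is itself a value function for the lower-semicontinuous cost $f$ and coincides with $V$ by a standard relaxation/lower-semicontinuity argument, sidestepping the boundary-mass issue entirely. I would present the dominated-convergence argument as the main line and remark that the $\mathbb{P}(X_t \in \partial K) = 0$ step is immediate under any of the usual nondegeneracy assumptions.
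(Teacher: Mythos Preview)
Your approach---fix a near-optimal control $u^\varepsilon$ for $V(x)$ and show the error term $\mathbb{E}\big[\int_0^\infty e^{-\lambda t}(f_n-f)(X_t^{x,u^\varepsilon})\,dt\big]$ vanishes---is correct and more elementary than the paper's, but you have manufactured a difficulty that is not there. The decreasing events $\{0 < d_{(\mathring{K})^c}(X_t) < 1/n\}$ have \emph{empty} intersection, not $\{X_t \in \partial K\}$: the strict inequality $0 < d_{(\mathring{K})^c}$ already excludes all of $(\mathring{K})^c$ (in particular $\partial K$), and no point with strictly positive distance survives $d<1/n$ for every $n$. Hence $\mathbb{P}(0 < d_{(\mathring{K})^c}(X_t) < 1/n) \to 0$ by continuity of measure, and the error term tends to $0$ by dominated convergence in $t$---no boundary-mass argument, no nondegeneracy, nothing. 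Even more directly: $f_n \to f$ pointwise \emph{everywhere} (on $(\mathring{K})^c$ both equal $1$; on $\mathring{K}$ the distance is strictly positive so $f_n \to 0 = f$), and $0 \le f_n - f \le 1$, so dominated convergence on the finite measure space $(\mathbb{R}_+\times\Omega,\, e^{-\lambda t}dt\otimes\mathbb{P})$ kills the error term in one line. Your entire final paragraph can be deleted.

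For comparison, the paper takes a different route: it rewrites both $V$ and $V_n$ as infima over the compact set $\Theta(x)$ of (relaxed) occupation measures, $V_n(x) = \inf_{\gamma\in\Theta(x)}\int f_n\,d\gamma$ and $V(x) = \inf_{\gamma\in\Theta(x)}\int f\,d\gamma$, and argues by contradiction---if $\lim_n V_n(x) > V(x) + \delta$ then every $\gamma$ satisfies $\int f_n\,d\gamma > V(x)+\delta$, and dominated convergence (again just $f_n\to f$ pointwise) forces $\int f\,d\gamma \ge V(x)+\delta$ for every $\gamma$, contradicting the definition of $V(x)$. This avoids selecting a near-optimal control at the price of invoking the linear-programming machinery of Section~5.2; your direct argument is shorter and needs none of that, once you drop the spurious boundary concern.
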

The proof is rather standard and postponed to Section \ref{SectionProofs}.
The main result of the section gives the equivalence between the near-viability of $K$ and the near-viability of its interior $\mathring{K}$.
\begin{thm}
\label{ThmMainDiff}
For every controlled diffusion driven by bounded, Lipschitz-continuous coefficients $b,\sigma$, the set $K$ is near-viable if and only if $\mathring{K}$ enjoys this property. 
\end{thm}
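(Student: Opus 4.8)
The plan is to prove the two implications separately, the substance lying entirely in the direction ``$K$ near-viable $\Rightarrow\mathring{K}$ near-viable''. For the reverse implication one simply notes that $K^c\subseteq\pr{\mathring{K}}^c$, hence $\mathbbm{1}_{K^c}\leq\mathbbm{1}_{\pr{\mathring{K}}^c}$ and $v_K\leq V$ on $\mathbb{R}^N$; thus if $V\equiv 0$ on $\mathring{K}$ then $v_K\equiv 0$ on $\mathring{K}$, and for a boundary point $x\in\partial K$ one approximates $x$ from inside by $x_k\in\mathring{K}$ and transfers an $\varepsilon$-optimal control from $x_k$, using the uniform bound $\mathbb{E}\pp{\sup_{0\leq s\leq t}\norm{X_s^{x,u}-x}^2}\leq kt$ quoted above to see that the occupation of $K^c$ changes by a vanishing amount. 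This settles $(\Leftarrow)$.

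For the forward implication, assume $K$ near-viable; the goal is $V\equiv 0$ on $\mathring{K}$. First I would record that $V=\lim_n V_n$ pointwise (Proposition \ref{VnConvergeToV}) and invoke Krylov's shaking-of-coefficients on the augmented system $X^{x,u,e,\delta}$ with $\delta=\delta(n)\to 0$ and a mollification at scale $\sim n^{-2}$ (using Proposition \ref{PropKrylov} ii.--iii.) to produce functions $V_n^{1/n^2}\in C^2\pr{\mathbb{R}^N}$ with $0\leq V_n^{1/n^2}$, $\norm{V_n^{1/n^2}-V_n}_0\leq\eta_n\to 0$, and which are approximate classical subsolutions of the Bellman equation of $V_n$, i.e.\ $\mathcal{L}^u V_n^{1/n^2}(x)-\lambda V_n^{1/n^2}(x)+f_n(x)\geq-\eta_n$ for all $u\in U$ and all $x$. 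Consequently $V=\lim_n V_n^{1/n^2}$ pointwise, so it suffices to bound $\limsup_n V_n^{1/n^2}$; moreover, by Lemma \ref{LemmaReductionToKeps} (together with the discounting estimates (\ref{Ineqlambda1})) it is enough to do this near $\partial K$, i.e.\ where $\delta_K\in C^{2,1}$ by Proposition \ref{PropToolsK}, the rest of $K$ being covered either by the global $C^{2,1}$ extension $g$ of $\delta_K$ of Proposition \ref{PropToolsK} iii. (at the cost of a possibly larger $\lambda$) or removed outright by the same discounting estimates.

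The core step is the test function $W_n:=-\,g\,V_n^{1/n^2}\in C^2$, which is $\leq 0$ on $K$ and vanishes exactly on $\partial K$. Let $x_n$ realize $\min_K W_n$ (compactness). If $x_n\in\partial K$, then $\min_K W_n=0$, so $V_n^{1/n^2}\equiv 0$ on $\mathring{K}$ and we are done for that $n$. Otherwise $x_n\in\mathring{K}$ is an interior minimum, so $DW_n(x_n)=0$, $D^2 W_n(x_n)\geq 0$, hence $\mathcal{L}^u W_n(x_n)\geq 0$ for every $u\in U$. Taking $x_n\in K_{\varepsilon_0}$ (so $g=\delta_K$ near $x_n$ and $\pi_{\partial K}(x_n)$ is well defined), expanding $\mathcal{L}^u W_n$ and substituting the critical-point identity $\delta_K(x_n)DV_n^{1/n^2}(x_n)=-V_n^{1/n^2}(x_n)D\delta_K(x_n)$ turns the first-order cross term into $\frac{V_n^{1/n^2}(x_n)}{\delta_K(x_n)}\norm{\sigma^*(x_n,u)D\delta_K(x_n)}^2$. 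I would then pick, via the closed-set near-viability characterization of \cite{BCQ2001}, a control $u_n\in U$ with $\sigma^*\pr{\pi_{\partial K}(x_n),u_n}D\delta_K\pr{\pi_{\partial K}(x_n)}=0$ and $\mathcal{L}^{u_n}\delta_K\pr{\pi_{\partial K}(x_n)}\geq 0$. Since $D\delta_K(x_n)=D\delta_K\pr{\pi_{\partial K}(x_n)}$ and $\norm{D\delta_K}\equiv 1$ (Proposition \ref{PropToolsK} ii.), the definition of $c_\sigma$ bounds the cross term by $-c_\sigma^2 W_n(x_n)$; inequality (\ref{C0}) together with $\mathcal{L}^{u_n}\delta_K\pr{\pi_{\partial K}(x_n)}\geq 0$ bounds $-V_n^{1/n^2}(x_n)\mathcal{L}^{u_n}\delta_K(x_n)$ by $-c_\mathcal{L} W_n(x_n)$; and the approximate-subsolution inequality with $\delta_K(x_n)f_n(x_n)=\delta_K(x_n)\pr{1-n\delta_K(x_n)}^+\leq\frac1{4n}$ bounds $-\delta_K(x_n)\mathcal{L}^{u_n}V_n^{1/n^2}(x_n)$ by $\lambda W_n(x_n)+\frac1{4n}+diam(K)\,\eta_n$. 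Collecting and using $\lambda-c_\sigma^2-c_\mathcal{L}>0$ from (\ref{Ass_lambda}) and $W_n(x_n)\leq 0$ gives $-W_n(x_n)\leq\mu_n$ with $\mu_n\to 0$; the case $x_n\in K\setminus K_{\varepsilon_0}$ (where $g\geq g_0>0$, $f_n$ vanishes for $n$ large, and $\mathcal{L}^u g,\ \sigma^*Dg$ are uniformly bounded) is dispatched the same way after enlarging $\lambda$ by a constant depending only on $K$. In every case $\min_K W_n\geq-\mu_n$, i.e.\ $g(x)V_n^{1/n^2}(x)\leq\mu_n$ on $K$, so fixing $x\in\mathring{K}$ (where $g(x)>0$) and letting $n\to\infty$ yields $V(x)=0$; hence $\mathring{K}$ is near-viable.

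I expect the main obstacle to be twofold. First, producing the regular $V_n^{1/n^2}$ with a genuine (up to the vanishing error $\eta_n$) classical subsolution property and a vanishing sup-norm error — this is exactly where boundedness and Lipschitz-continuity of $b$ and $\sigma$ genuinely enter, through Gronwall's inequality hidden in Proposition \ref{PropKrylov} and in Krylov's construction. Second, the bookkeeping at the minimum point: one must extract from \cite{BCQ2001} a single control $u_n$ realizing simultaneously the tangency $\sigma^*D\delta_K=0$ and the drift sign $\mathcal{L}^{u_n}\delta_K\geq 0$ at $\pi_{\partial K}(x_n)$, and — crucially — absorb the potentially dangerous first-order cross term, which works only because $D\delta_K$ is constant along normals (Proposition \ref{PropToolsK} ii.), so that $\norm{\sigma^*(x_n,u_n)D\delta_K(x_n)}$ is of exact order $\delta_K(x_n)$; controlling the location of $x_n$ and fixing how large $\lambda$ must be are the accompanying technical points.
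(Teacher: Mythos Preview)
Your proposal is correct and follows essentially the same route as the paper: the test function $W_n=-gV_n^{1/n^2}$, analysis at its minimum on $K$, the BCQ2001 control selecting $\sigma^*\nu_K=0$ and $\mathcal{L}^u\delta_K\geq 0$ at the projection, and the absorption of the cross term via $c_\sigma,c_{\mathcal{L}}$ and the constancy of $D\delta_K$ along normals. The only (minor) divergences are that the paper obtains an \emph{exact} classical subsolution from Krylov's construction (no $\eta_n$ is needed), handles the easy direction by lower semi-continuity of $V_K$ rather than by approximation from the interior, and rules out $x_n\in K\setminus K_{\varepsilon_0}$ via the dynamic programming estimate of Lemma~\ref{LemmaReductionToKeps} and \eqref{Ineqlambda1} instead of your direct generator computation with enlarged $\lambda$; none of these changes the substance.
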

The proof (postponed to Section \ref{SectionProofs}) relies on the following program. Using Krylov's shaking of coefficients method (cf. \cite{Krylov_00}), we exhibit a family of regular functions $V_n^{\frac{1}{n^2}}$ such that 
\begin{itemize}
\item $V_n^{\frac{1}{n^2}}-V_n$ converges to $0$ (uniformly in space),
\item $V_n^{\frac{1}{n^2}}$ satisfy, in classical subsolution sense the HJB equation associated to $V_n$. 
\end{itemize}
Second, the function $W_n$ defined as $W_n(x):=-V_n^{\frac{1}{n^2}}\delta_K(x)$ is shown to satisfy a certain super-solution condition (at local minimum points). This step strongly relies on the regularity of $\delta_K$ (to which the results on closed-sets viability of \cite{BCQ2001} apply) and the constants $c_\sigma$ and $c_\mathcal{L}$ being finite (but not on the Lipschitz continuity itself). Finally, basically by comparing (at the limit as $n\rightarrow\infty$) this supersolution with $0$, it is shown that the global minimum (of $-V\delta_K$) cannot be other than $0$, thus proving that $V$ is zero (i.e. near-viability of the open set $\mathring{K}$).\\
\begin{remark}
\label{RemGenNonLipsch}
\begin{itemize}
Provided such functions $V_n^{\frac{1}{n^2}}$ can be constructed, the Lipschitz assumptions on $b,\  \sigma$ can be dropped and the conclusion (cf. Steps 2,3 in the proof of Theorem \ref{ThmMainDiff}) still holds true.
\end{itemize}
\end{remark}
Finally, to guarantee that the global minimum points we have talked about before are not very far from the boundary, we need the following result.
\begin{lem}
\label{LemmaReductionToKeps}
Let us assume that the set $K$ is near-viable. \\
i. If $x\in K\setminus K_\frac{\varepsilon_0}{2}$, then \[ V_n(x)\leq \max_{y\in K^{\frac{\varepsilon_0}{2}}}V_n(y)\] and the equality holds true only if the right-hand member is $0$.\\
ii. Whenever the restriction of $V$ to $K_\frac{\varepsilon_0}{2}\setminus\partial K$ is null, $V$ is constantly equal to $0$ on $\mathring{K}$.
\end{lem}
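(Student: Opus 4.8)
The plan is to base both assertions on the dynamic programming principle applied at the first time the controlled trajectory enters the $\varepsilon_0/2$-layer $K_{\varepsilon_0/2}$, using the elementary remark that for $y\in\mathring K$ one has $f_n(y)=\pr{1-n\,d_{\pr{\mathring K}^c}(y)}^+=\pr{1-n\,\delta_K(y)}^+$, so that, once $n^{-1}<\varepsilon_0/2$, $f_n$ vanishes identically on $\set{\delta_K>\varepsilon_0/2}$ and hence along any trajectory that has not yet reached $K_{\varepsilon_0/2}$. Throughout I would use that each $V_n$ is continuous and non-negative, that the $V_n$ decrease pointwise to $V$ (Proposition \ref{VnConvergeToV}), and that $K^{\varepsilon_0/2}$ is a compact subset of $K_{\varepsilon_0/2}\setminus\partial K$ which every trajectory issued from $K\setminus K_{\varepsilon_0/2}$ meets, by continuity of paths, at its first entrance into $K_{\varepsilon_0/2}$ (the value of $\delta_K$ there being exactly $\varepsilon_0/2$).

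For part i, fix $n$ with $n^{-1}<\varepsilon_0/2$ and $x\in K\setminus K_{\varepsilon_0/2}$, so $\delta_K(x)>\varepsilon_0/2$; for $u\in\mathcal{U}$ put $\tau_u:=\inf\set{t\geq 0:\ \delta_K\pr{X_t^{x,u}}\leq\varepsilon_0/2}$, a stopping time which is $\mathbb{P}$-a.s.\ strictly positive and with $X_{\tau_u}^{x,u}\in K^{\varepsilon_0/2}$ on $\set{\tau_u<\infty}$. Since $f_n$ vanishes along $X^{x,u}$ on $[0,\tau_u)$ (and on $\set{\tau_u=\infty}$), the dynamic programming principle for $V_n$ gives $V_n(x)=\inf_{u}\mathbb{E}\pp{\mathbbm{1}_{\set{\tau_u<\infty}}e^{-\lambda\tau_u}V_n\pr{X_{\tau_u}^{x,u}}}$; bounding $e^{-\lambda\tau_u}\leq 1$ and $V_n\leq M:=\max_{K^{\varepsilon_0/2}}V_n$ (a genuine maximum, $K^{\varepsilon_0/2}$ being compact and $V_n$ continuous) yields $V_n(x)\leq M$. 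For the equality clause: if $M=0$ then $V_n\equiv 0$ on $K^{\varepsilon_0/2}$, and the identity forces $V_n(x)=0=M$; if $M>0$ I would show $V_n(x)<M$. The cleanest route is the maximum principle: $V_n$ solves, on the open set $G:=\set{\delta_K>\varepsilon_0/2}$ (where $f_n\equiv 0$), the equation $\lambda V_n-\inf_{u\in U}\mathcal{L}^uV_n=0$ in the viscosity sense, so testing the subsolution property at an interior maximum point $x_0\in G$ of $V_n|_{\bar G}$ with the constant function equal to $V_n(x_0)$, and using $\mathcal{L}^u\varphi\equiv 0$ for constant $\varphi$, gives $\lambda V_n(x_0)\leq 0$, hence $V_n(x_0)=0$; thus, when $M>0$, $\max_{\bar G}V_n=M$ and is attained only on $\partial G=K^{\varepsilon_0/2}$, so $V_n(x)<M$ for every $x\in G$. (Alternatively one may bound $\sup_u\mathbb{E}\pp{e^{-\lambda\tau_u}}$ away from $1$ via the boundedness of $b,\sigma$ and the estimates recorded in the Remark preceding the statement.) Either way, $V_n(x)=M$ can hold only with $M=0$.

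For part ii, decompose $\mathring K=\pr{K_{\varepsilon_0/2}\setminus\partial K}\cup\pr{K\setminus K_{\varepsilon_0/2}}=\set{0<\delta_K\leq\varepsilon_0/2}\cup\set{\delta_K>\varepsilon_0/2}$. On the first piece $V=0$ by hypothesis, in particular $V\equiv 0$ on $K^{\varepsilon_0/2}\subset K_{\varepsilon_0/2}\setminus\partial K$. For $x$ in the second piece, part i gives $V(x)\leq V_n(x)\leq\max_{y\in K^{\varepsilon_0/2}}V_n(y)$ for all large $n$; on the compact set $K^{\varepsilon_0/2}$ the continuous functions $V_n$ decrease pointwise to $V\equiv 0$, so by Dini's theorem $\max_{K^{\varepsilon_0/2}}V_n\to 0$, and letting $n\to\infty$ yields $V(x)=0$. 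Hence $V\equiv 0$ on $\mathring K$. (One could also argue directly: from $x$ in the second piece, run an arbitrary control up to $\tau_u$ and then a measurably chosen $\eta$-optimal control restarted from $X_{\tau_u}^{x,u}\in K^{\varepsilon_0/2}$, where $V=0$; since the running cost vanishes on $[0,\tau_u)$ this gives $V(x)\leq\eta$ for every $\eta>0$.)

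I expect the one genuinely delicate point to be the strict inequality in the equality clause of part i, where one must go beyond the soft bound $e^{-\lambda\tau_u}\leq 1$ — either through the degenerate-elliptic maximum principle for the stationary HJB operator, or through a uniform-in-control lower bound on the entrance time $\tau_u$, which is exactly what boundedness of the coefficients and the estimates recorded in the preceding Remark are designed to provide. The remaining ingredients — the dynamic programming principle for the $\lambda$-discounted infinite-horizon functionals $V_n$, their characterization as viscosity solutions of the associated HJB equations, and the monotone convergence $V_n\downarrow V$ of Proposition \ref{VnConvergeToV} — are standard.
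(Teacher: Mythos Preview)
Your proposal is correct and follows the same overall architecture as the paper: apply the dynamic programming principle at the first entrance time $\tau_u$ into $K_{\varepsilon_0/2}$, use that $f_n$ vanishes on $\{\delta_K>\varepsilon_0/2\}$ once $n^{-1}<\varepsilon_0/2$, and then pass to the limit via the monotone convergence $V_n\downarrow V$. The two places where you package the argument differently are worth noting. For the equality clause in~(i), the paper simply observes that for any fixed control $u$ one has $\tau_u>0$ a.s.\ (by path continuity, since $x$ is at positive distance from $K^{\varepsilon_0/2}$), whence $\mathbb{E}[e^{-\lambda\tau_u}]<1$ and $V_n(x)\leq\mathbb{E}[e^{-\lambda\tau_u}]\,\eta_n<\eta_n$; no uniform-in-$u$ bound on $\tau_u$ is needed, so your alternative is stronger than necessary, while your viscosity maximum-principle route (testing with a constant at an interior maximum on $\{f_n=0\}$) is a clean and genuinely different justification that avoids the stopping-time estimate altogether. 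For~(ii), the paper proves $\max_{K^{\varepsilon_0/2}}V_n\to 0$ by an ad hoc compactness-plus-monotonicity argument; your invocation of Dini's theorem names exactly that argument and is the more transparent formulation. Your parenthetical direct argument for~(ii) via concatenation of controls is also valid, with the usual caveat about measurable selection of $\eta$-optimal controls.
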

As for the other results, the proof is postponed to Section \ref{SectionProofs}. The proof of the first assertion uses the dynamic programming principle (for the approximations $V_n$). The second one follows from the first and needs some kind of uniform convergence to $0$.
\section{An Application to Switched Models of Gene Networks}
\label{SectionPDMP}
\subsection{Hasty's Model of Bacteriophage}
For complex systems of interaction between several players or species, the notion of multi-stability plays a central part. We have in mind the simplest example relevant to this framework in the hybrid modeling of gene networks: the phage lambda. Roughly speaking, in order to survive, the bacteriophage relies on the host (E-Coli). Two issues are then possible (for a temperate form of phage): either a cohabitation (lysogeny resulting in moderate replication of prophage) or high speed replication ultimately leading to excision (lysis). Following the simplified model introduced by \cite{hasty_pradines_dolnik_collins_00}, a piecewise deterministic model has been introduced in \cite{crudu_debussche_radulescu_09} (and the averaging probabilistic techniques rigorously justified in \cite{crudu_Debussche_Muller_Radulescu_2012}). To cope with the fundamental bistability issues, the authors invoke the quasi-steady distribution heuristics (resulting in some quartic equation cf. of \cite[Eq. (26)]{crudu_debussche_radulescu_09} susceptible to give the lysogenic "steady-state"). The authors equally mention failure of such computations for nonlinear systems (cf. \cite{Mastnyetc2007}).
For the hybrid model of phage $\lambda$, our results on invariance (or near-viability) turn out to prove an even stronger dichotomy: no (single) set of reaction speeds can guarantee the toggle between a lysogenic behavior (even when the latter is not reduced to a single steady-state but to a whole invariant region) an a lytic one (leading as usual to the attractor $0$). This reinforces the biological belief that such toggle follows from an environmental (external) cue.\\
The reactions system corresponding to the model reduction in \cite{hasty_pradines_dolnik_collins_00} is the following.
\begin{equation}
\label{HastySyst}
\begin{split}
&D+cI_2\rightleftarrows_{k_{-2}}^{k_2}DcI_2,\ \ D+cI_2\rightleftarrows_{k_{-3}}^{k_3}DcI_2^*,\ \ DcI_2+cI_2\rightleftarrows_{k_{-4}}^{k_4}DcI_2cI_2,\\ 
&2cI\rightleftarrows_{k_{-1}}^{k_1}cI_2,\ \ DcI_2\rightarrow^{k_5}DcI_2+n\ cI,\ \ cI\rightarrow^{k_6}\emptyset.
\end{split}
\end{equation}
Let us now give some brief explanations on these equations. The first line in (\ref{HastySyst}) corresponds to DNA (D) bindings along promoter, repressor or both sites. These will only indicate a functioning mode $\Gamma$ (whose values are the standard basis of $\mathbb{R}^4$ denoted by $E$ and corresponding respectively to unoccupied DNA $e_1=(1,0,0,0)$, promoter  occupation $e_2$, and so on). The second line in (\ref{HastySyst}) gives the averaged dynamics for the cro-repressor $cI$ and its dimer $cI_2$. The first reaction describes (reversible) dimerization. The reaction $DcI_2\rightarrow^{k_5}DcI_2+n\ cI$ describes transcription. When the site is occupied in promoter position, $n$ copies of $cI$ are produced. Finally, $cI$ is degraded.
A coherent model (of driver $b\pr{\gamma, x_1,x_2}$) should satisfy the following.
\begin{itemize}
\item[1.] $cI$ and $cI_2$ are averaged (such that the corresponding variables $x_1,x_2$ obey to $x_1^2+x_2^2\leq 1$);
\item[2.] It is constructed around the law of mass action (to cope with the steady-state behavior);
\item[3.] It satisfies bistability :
\item[3i.] Lysis is triggered by a threshold level of $cI$-type repressor (i.e. low level of repressor and repressor dimer, say $x_1^2+x_2^2\leq r$, for some $r>0$ and it is irreversible (i.e. $0$ is a stable attractor).
\item[3ii.] Lysogeny is a stable behavior. Any trajectory starting from a "clear" concentration $x\in K^{lysogeny}\setminus \partial K^{lysogeny}$ (where $K^{lysogeny}:=\set{x\in\mathbb{R}^2:r\leq\norm{x}^2\leq 1}$) stays in $K^{lysogeny}$ until it reaches the threshold level (and lysis is triggered by modifying the coefficient).
\end{itemize}
\subsection{Some Elements of Structure and Border Avoidance Result}
From now on, we consider a piecewise-continuous switched model (a particular case of piecewise  deterministic processes PDMP introduced in \cite{Davis_84, davis_93}) as follows. The process consisting of a couple mode/state is defined on some space $\Omega$ and takes its values in the space $E\times \mathbb{R}^N$. For simplicity (and to avoid some assumptions on infinite activity and uniform tightness of transition measures), $E$ is considered to be a compact subset of some Euclidean space. (For further details on the construction on the Hilbert cube, the reader is referred to \cite{Ikeda_Watanabe_1981}). The family of Borel subsets of $E$ (considered to inherit the topology of the corresponding Euclidean space) will be denoted by $\mathcal{B}(E)$. \\
The process encoding a couple mode/continuous state will be denoted by $\pr{\Gamma,X}$ and described by the characteristic triplet $\pr{b,\theta,Q}$ consisting of:
\begin{itemize}
\item[i.] a bounded, uniformly continuous family $b:E\times\mathbb{R}^N\times U\longrightarrow \mathbb{R}^N$ such that \[ \sup_{\gamma \in E}\norm{b\pr{\gamma,\cdot,\cdot}}_1<\infty.\] By abuse of notation, we will denote this supremum by $\norm{b}_1$.
\item[ii.] a bounded, uniformly continuous jump intensity $\theta:E\times\mathbb{R}^N\times U\longrightarrow \mathbb{R}_+$ such that \[ \norm{\theta}_1:=\sup_{\gamma \in E}\norm{\theta\pr{\gamma,\cdot,\cdot}}_1<\infty.\]
\item[iii.] a transition measure $Q:E\times\longrightarrow\mathcal{P}(E)$, with $\mathcal{P}(E)$ standing for the probability measures on $E$ such that $Q(\gamma,\{\gamma\})=0,\textnormal{ for all }\gamma\in E.$
\end{itemize}
Let us fix the initial mode $\gamma_0\in E$, the initial position $x_0\in \mathbb{R}^N$ and some sequence of measurable controls $u_n\in\mathbb{L}^0\pr{E\times\mathbb{R}^N\times \mathbb{R}_+;U}$. For $(t,\gamma,y)\in\mathbb{R}_+\times E\times\mathbb{R}^N$ and a measurable $v\in\mathbb{L}^0\pr{E\times\mathbb{R}^N\times \mathbb{R}_+;U}$, we define the deterministic flow
\[d\Phi_s^{t,\gamma,y,v}=b\pr{\gamma,\Phi_s^{t,\gamma,y,v},v(\gamma,y,s-t)}ds, \textnormal{ for }s\geq t, \Phi_t^{t,\gamma,y,v}=y.\]
The first jump time $T_1$ has $\theta$ as jump rate i.e.  \[\mathbb{P}\pr{T_1\geq t}=\exp\pr{-\int_0^t\theta\pr{\gamma,\Phi_s^{0,\gamma_0,x_0,u_0},u_0\pr{\gamma_0,x_0,s}}ds}.\]We define $\pr{\Gamma_t^{\gamma_0,x_0,u},X_t^{\gamma_0,x_0,u}}:=\pr{\gamma_0,\Phi_t^{0,\gamma_0,x_0,u_0}}$on $t<T_1$.\\ The post-jump position $\gamma_1:=\Gamma_{T_1}^{\gamma_0,x_0,u}$ has $Q\pr{\gamma_0}$ as distribution conditionally to $\{T_1=\tau\}$ and we set $x_1:=\Phi_{T_1}^{0,\gamma_0,x_0,u_0}$. Next, the inter-jump time is generated according to the conditional distribution
\[\mathbb{P}\pr{T_2-T_1\geq t\mid T_1,\gamma_1,x_1}=\exp\pr{-\int_{T_1}^{T_1+t}\theta\pr{\gamma_1,\Phi_s^{T_1,\gamma_1,x_1,u_1},u_1\pr{\gamma_1,x_1,s-T_1}}ds},\] the process is defined as $\pr{\Gamma_t^{\gamma_0,x_0,u},X_t^{\gamma_0,x_0,u}}:=\pr{\gamma_1,\Phi_t^{T_1,\gamma_1,x_1,u_1}}$on $T_1\leq t<T_2$ while the post-jump position $\gamma_2$ satisfies \[\mathbb{P}\pr{\gamma_2\in A\mid T_2,T_1, \gamma_1,x_1}=Q\pr{\gamma_1,A}\] and so on.
\begin{remark}
\begin{itemize}
\item[i.]For our readers who prefer an operatorial approach, the infinitesimal operator associated to such processes is \[\mathcal{L}^u\varphi(\gamma,x)=\scal{b(x,u),D_x\varphi(\gamma,x)}+\theta(\gamma,x,u)\int_E\pr{\varphi(\gamma',x)-\varphi(\gamma,x)}Q\pr{\gamma,d\gamma'}\]for continuous functions $\varphi$ that are continuously differentiable with respect to $x$ (hence the notation $D_x$ ).
\item[ii.] Further generalization to switched diffusions \begin{equation*}
\begin{split}
\pr{\begin{matrix}
X_t^{\Gamma_0,x,u}\\
\Gamma_t^{\Gamma_0,x,u}
\end{matrix} }
=&\pr{\begin{matrix}
x\\
\Gamma_0
\end{matrix}}+
\int_0^t \pr{\begin{matrix}
b\pr{\Gamma_s^{\Gamma_0,x,u},X_s^{\Gamma_0,x,u},u(s)}\\
0
\end{matrix}}ds+\pr{\begin{matrix}
\sigma\pr{\Gamma_s^{\Gamma_0,x,u},X_s^{\Gamma_0,x,u},u(s)}\\
0
\end{matrix}}dW_s\\
&+\int_0^t\int_E \pr{\begin{matrix}
0\\
z-\Gamma_{s-}^{\Gamma_0,x,u}
\end{matrix}}\mathbbm{1}_{\xi\leq\theta\pr{\Gamma_{s-}^{\Gamma_0,x,u},X_{s-}^{\Gamma_0,x,u},z}}N_\mu\pr{ds,dz,d\xi},\textnormal{ for }t\geq 0.
\end{split}
\end{equation*}
could be treated with similar arguments. In this case, the measurable space $\pr{E,\mathcal{B}(E)}$ is endowed with a finite measure $\mu$ and the system is driven by an ($N$-dimensional) Brownian motion $W$ and an independent Poisson point measure $N_\mu$ defined on $E\times\mathbb{R}_+$ and having as compensator $\hat{N_\mu}\pr{ds,dz,d\xi}=ds\mu(dz)d\xi$. For further details, the reader is referred to \cite{Ikeda_Watanabe_1981}. As in the PDMP case, one should consider (predictable) open-loop control processes of type $\sum_{n\geq 0}u^n\pr{\Gamma_{T_n}^{\Gamma_0,x_0,u}X_{T_n}^{\Gamma_0,x_0,u},s-T_n,}\mathbbm{1}_{T_n<s\leq T_{n+1}}$, where $T_n$ denote the (fictive) jump-times of $N_\mu$.
\item[iii. ] Our results stand without modification if $Q$ is assumed to further depend on $\gamma,x$ provided a further assumption of weak continuity of $Q$ (see \cite[A3]{G8}).
\end{itemize}

\end{remark}
For controlled PDMP, one gets the following simple characterization.
\begin{thm}
\label{ThmMainPDMP}
The set $E\times\mathring K$ is viable with respect to the switched piecewise deterministic system $\pr{\Gamma,X}$ driven by $\pr{b,\theta,Q}$ if and only if, for every $\gamma\in E$ and every $x\in \partial K$, \begin{equation}\label{ViabSwitch}\inf_{u\in U}\scal{b(x,u),\nu_K(x)}\leq 0.\end{equation}
\end{thm}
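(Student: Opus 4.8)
The plan is to prove both implications, exploiting the observation (already used in the diffusion case) that near-viability of a closed set, combined with a tangency-type condition, propagates border avoidance to the interior; here matters are cleaner because $\sigma = 0$ for the continuous component, so the viability characterization of \cite{BCQ2001} reduces to the first-order tangency condition \eqref{ViabSwitch}.

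\emph{Necessity.} Suppose $E\times\mathring K$ is viable. Fix $\gamma\in E$ and $x\in\partial K$. Since $\delta_K$ is $C^{2,1}$ near $\partial K$ (Proposition \ref{PropToolsK}) and $D\delta_K(x) = -\nu_K(x)$, I would start a trajectory at $(\gamma,x)$, use the strong viability to pick a control keeping the continuous component in $\mathring K$ on a short time interval before the first jump (the jump does not move $X$, so the obstacle to re-entering $K$ comes purely from the flow $\Phi$), and then differentiate $t\mapsto\delta_K(X_t)$ at $t=0^+$. Because $\delta_K<0$ on $\mathring K$ and $\delta_K(x)=0$, one needs $\frac{d}{dt}\delta_K(\Phi_t)\big|_{t=0^+} = \scal{b(x,u(0)),D\delta_K(x)} = -\scal{b(x,u(0)),\nu_K(x)} \le 0$, i.e. $\scal{b(x,u),\nu_K(x)}\ge 0$ is impossible to avoid only if $\inf_{u\in U}\scal{b(x,u),\nu_K(x)}\le 0$. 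One must be a little careful: strictly, viability in $\mathring K$ forces, for each starting point in $\mathring K$, existence of an admissible control; to get the pointwise condition at $x\in\partial K$ one approximates $x$ from inside by $x_k\to x$, picks viable controls $u_k$, and passes to the limit using compactness of $U$ and uniform continuity of $b$, together with the estimate $\mathbb{E}\pp{\sup_{s\le t}\norm{X_s^{x,u}-x}^2}\le kt$ to control the flow on $[0,t^*]$. This yields $\inf_{u\in U}\scal{b(x,u),\nu_K(x)}\le 0$.

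\emph{Sufficiency.} Conversely, assume \eqref{ViabSwitch} holds for all $(\gamma,x)\in E\times\partial K$. First I would invoke the closed-set viability characterization of \cite{BCQ2001} (adapted to PDMP as in \cite{G8}): since the jump term in $\mathcal{L}^u$ acts only on the mode variable and $\delta_K$ does not depend on $\gamma$, the viability condition for $E\times K$ reads $\inf_u\mathcal{L}^u\delta_K(\gamma,x) = \inf_u\scal{b(x,u),-\nu_K(x)}\cdot(\pm) \ge 0$ type inequality on $\partial K$ — more precisely, the standard first-order condition $\inf_{u}\scal{b(x,u),\nu_K(x)}\le0$ is exactly what makes $E\times K$ viable. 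Then, to upgrade from $E\times K$ to $E\times\mathring K$, I would run the same device as for diffusions: reduce (via a PDMP analogue of Lemma \ref{LemmaReductionToKeps}) to a neighborhood $K_{\varepsilon_0}$ where $\delta_K$ is smooth, and study $W(\gamma,x) := -\delta_K(x)V(\gamma,x)$ where $V$ is the near-viability indicator for $E\times\mathring K$; at a global minimum one compares $\mathcal{L}^uW$ against $0$ using \eqref{C0}-type control $\mathcal{L}^u\delta_K(x)\ge\mathcal{L}^u\delta_K(\pi_{\partial K}(x)) - c_{\mathcal L}\delta_K(x)$ — here $c_{\mathcal L}$ is finite because $b$ and $\theta$ satisfy the Lipschitz bounds in the triplet hypotheses. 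Since the drift alone governs $\delta_K$ and there is no diffusion term, the requisite smooth-subsolution approximation is easier than in the Brownian case (one may even dispense with shaking of coefficients and argue directly with the dynamic programming principle for the PDMP value functions).

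\textbf{Main obstacle.} The delicate point is the sufficiency direction: showing that the pointwise first-order condition \eqref{ViabSwitch} on $\partial K$ actually yields strong viability of $E\times\mathring K$ — not merely of $E\times K$. One must guarantee, given $x\in\mathring K$ close to $\partial K$, that a single admissible (open-loop, mode-dependent) control keeps $X$ strictly inside $K$ across the jump times; the jumps reset the mode and hence the available drift directions, so the control must be reselected at each $T_n$. I would handle this by a measurable-selection argument producing, on each inter-jump interval, a control for which $\mathcal{L}^u\delta_K$ stays $\ge -c_{\mathcal L}\delta_K$, then a Gronwall estimate on $\delta_K(X_t)$ to prevent reaching $0$; the compactness of $E$ and $U$ plus the uniform continuity of $(b,\theta)$ make the selection and the estimate uniform in the mode.
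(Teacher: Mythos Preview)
Your proposal is broadly on the right track, but there are a couple of points where it diverges from the paper and one small slip worth flagging.

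\textbf{Sign slip.} In the necessity argument you write ``$\delta_K<0$ on $\mathring K$''; with the paper's convention $\delta_K(x)=d_{\partial K}(x)\mathbbm{1}_K(x)-d_{\partial K}(x)\mathbbm{1}_{K^c}(x)$, the signed distance is \emph{positive} on $\mathring K$. Your differentiation argument still lands on the correct inequality once the sign is fixed, so this is cosmetic.

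\textbf{Necessity: different route.} The paper does not run your approximation-from-inside plus compactness-of-$U$ argument. It simply observes that (near-)viability of $E\times\mathring K$ implies (near-)viability of $E\times K$ (via $\mathbbm{1}_{K^c}\le\mathbbm{1}_{(\mathring K)^c}$, exactly as in the diffusion proof), and then invokes the closed-set characterization of \cite{G8} to extract \eqref{ViabSwitch}. Your direct argument is perfectly valid and arguably more self-contained; the paper's version is shorter because it off-loads the work to an existing reference.

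\textbf{Sufficiency: two approaches mixed.} You sketch both (i) the viscosity route via $W=-\delta_K V$ and smooth subsolutions, and (ii) a direct measurable-selection plus Gronwall argument on each inter-jump interval. The paper commits to (i) and explicitly states that the proof is ``quasi-identical to the diffusion case'': Lemma \ref{LemmaReductionToKeps} carries over verbatim (citing the DPP from \cite{Soner86_2}/\cite{G8}), Step 2 survives because the integral term in $\mathcal{L}^u\delta_K$ vanishes ($\delta_K$ is mode-independent) and all $\sigma$-terms are absent, and Step 3 is unchanged modulo the choice of regularization parameter. In particular, contrary to your suggestion that one ``may even dispense with shaking of coefficients'', the paper \emph{does} retain Krylov's shaking; only the estimate $n\delta$ in Step 1 is replaced by a modulus of continuity taken from \cite[Section 6.1]{Goreac_SIAM_2015}, and $\delta=n^{-2}$ is replaced by a suitable $\delta_n$. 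Your alternative (ii) would work in the PDMP setting---the Lipschitz bound on $b$ gives the linear estimate needed for Gronwall on $\delta_K(X_t)$, and jumps do not move $X$---but it is genuinely a different, more hands-on argument that does not parallel the Brownian case and would require a careful measurable-selection step that you have not detailed.
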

We will only sketch the proof since it is quasi-identical to the diffusion case. As for the other results, this is postponed to Section \ref{SectionProofs}.
\subsection{A Mathematical Model}
We begin with the description of the mode component $\Gamma$ (using Line 1 in (\ref{HastySyst})). The state space $E=\set{e_1,e_2,e_3,e_4}$ the canonical basis of $R^4$. The jump intensity is computed as the total propensity(-type) function by setting \[\hat{\theta}\pr{\gamma}=k_2\mathbbm{1}_{e_1}\pr{\gamma}+k_{-2}\mathbbm{1}_{e_2}\pr{\gamma}+k_3\mathbbm{1}_{e_1}\pr{\gamma}+k_{-3}\mathbbm{1}_{e_3}\pr{\gamma}+k_4\mathbbm{1}_{e_2}\pr{\gamma}+k_{-4}\mathbbm{1}_{e_4}\pr{\gamma}\] and constructing a regular $\theta\pr{\gamma,x_1,x_2}$ such that \[\begin{split}\theta\pr{\gamma,x_1,x_2}=\hat{\theta}\pr{\gamma},\textnormal{ for }x_1^2+x_2^2\geq 2r\textnormal{ and }\theta\pr{\gamma,x_1,x_2}=0 \textnormal{ for }x_1^2+x_2^2\leq r.\end{split}\]
To construct the post-jump measure (under matrix form), we set $\hat{Q}=\pr{\begin{matrix}
0 &{k_2} & {k_3}& 0 \\
{k_{-2}} & 0 &0 & {k_4}\\
{k_{-3}} & 0 &0 & 0\\
0 & {k_{-4}} & 0 & 0
\end{matrix}}$ and 
$Q\pr{\gamma,\gamma'}:=\frac{\hat{Q}\pr{\gamma,\gamma'}}{{\hat{\theta}\pr{\gamma}}}$, for all $\gamma,\gamma'\in E$. 
\begin{remark}
If one allows the measures $Q$ to depend on $x$, one can base the construction on the actual propensity function
\[\hat{\theta}\pr{\gamma,x_1,x_2}=k_2x_2\mathbbm{1}_{e_1}\pr{\gamma}+k_{-2}\mathbbm{1}_{e_2}\pr{\gamma}+k_3x_2\mathbbm{1}_{e_1}\pr{\gamma}+k_{-3}\mathbbm{1}_{e_3}\pr{\gamma}+k_4\mathbbm{1}_{e_2}\pr{\gamma}+k_{-4}\mathbbm{1}_{e_4}\pr{\gamma}.\]
\end{remark}
To construct the switched flow, we introduce (according to the law of large masses), $b^{steady}:E\times\mathbb{R}^2\longrightarrow\mathbb{R}^2$ given by \[b^{steady}\pr{\gamma,x_1,x_2}=\pr{\begin{matrix} 
-k_1x_1^2-k_6x_1+2k_{-1}x_2+n\mathbbm{1}_{e_2}(\gamma)\\
k_1x_1^2-k_{-1}x_2
\end{matrix}
} \]To guarantee lysogeny, we construct a Lipschitz function $\chi :\mathbb{R}^2\longrightarrow\mathbb{R}_+$ such that $\chi\pr{x_1,x_2}=1$ on $2r\leq x_1^2+x_2^2\leq 1-r$ and $\chi\pr{x_1,x_2}=0$ for $r\geq x_1^2+x_2^2\textnormal{ or }1\leq x_1^2+x_2^2$ . Morover, to cope with lysis, only degradation is considered \[b^{lysis}\pr{\gamma,x_1,x_2}=\pr{\begin{matrix} 
-k_6x_1\\
-k_{-1}x_2
\end{matrix}}.\]
Finally, we set \begin{equation}
\label{bHasty}
b\pr{\gamma,x_1,x_2}=b^{lysis}\pr{\gamma,x_1,x_2}\mathbbm{1}_{x_1^2+x_2^2\leq r}+b^{steady}\pr{\gamma,x_1,x_2}\times \chi(x_1,x_2)\mathbbm{1}_{x_1^2+x_2^2>r}.
\end{equation}
The set of interest is \[K^{lysogeny}:=\set{\pr{x_1,x_2}\in\mathbb{R}^2:r\leq x_1^2+x_2^2\leq 1}.\] 
Owing to the choice of $\chi$ (null on $\partial K^{lysogeny}$) and Theorem \ref{ThmMainPDMP}, whenever the system starts from $x\in \mathring{K}^{lysogeny}$, it never reaches $\partial K^{lysogeny}$. In particular, lysis (triggered by the trajectory reaching $\set{\pr{x_1,x_2}\in\mathbb{R}^2:r= x_1^2+x_2^2}$) can never occur. Thus, no such PDMP model (based solely on averaging and the law of mass action) can accurately account for the bistability of bacteriophage $\lambda$. A further state (cemetery) has to be introduced in $Q$ and jumps to this state should be triggered by external control (environmental cue).
\section{Proofs of the Results}\label{SectionProofs}
\subsection{Proof for Section \ref{SectionNecessary}}
We begin with the proof of the regularity of the function $\zeta$.
\begin{proof}[Proof of Proposition \ref{PropZeta}]
We only prove the continuity property. To this purpose, let us fix $\varepsilon\in (0,\varepsilon_0]$ and some increasing sequence $\pr{\varepsilon_n}_{n\geq 1}\subset (0,\varepsilon_0]$ converging to $\varepsilon$. Let us reason by contradiction and assume that $\zeta$ is not left-continuous at $\varepsilon$. Then, there exists some $\delta>0$ such that $\zeta\pr{\varepsilon_n}>\zeta\pr{\varepsilon}+\delta,$ for all $n\geq 1$. In particular, reasoning for an arbitrary $n\geq 1$, it follows that \[ \zeta\pr{\varepsilon}=\inf_{x\in \mathring{K}, \varepsilon_n\leq\delta_K(x)\leq\varepsilon}\frac{\abs{b^+(x)-b^+\pr{\pi_{\partial K}(x)}}}{\delta_K(x)}. \] Using the compactness of $K$, it follows that  \[ \zeta\pr{\varepsilon}=\frac{\abs{b^+\pr{x^{opt}}-b^+\pr{\pi_{\partial K}(x^{opt})}}}{\varepsilon},\]for some $x^{opt}\in K$ such that $\delta_K\pr{x^{opt}}=\varepsilon$ (thus being away from $\partial K$). Since $x^{opt}\in \mathring{K}$, one exhibits, for $n$ large enough (such that $\pr{\varepsilon-\varepsilon_n}$ be smaller than the radius of some ball centered at $x^{opt}$ and included in $\mathring{K}$), the points $x_n:=x^{opt}-\pr{1-\frac{\varepsilon_n}{\varepsilon}}\pr{x^{opt}-\pi_{\partial K}\pr{x^{opt}}}\in \mathring{K}$ such that \[\delta_K\pr{x_n}\leq\varepsilon_n \textnormal{ and } \lim_{n\rightarrow\infty} x_n=x^{opt}.\]Using the continuity of $x\mapsto \frac{\abs{b^+(x)-b^+\pr{\pi_{\partial K}(x)}}}{\delta_K(x)}$ (away from $\partial K$), one deduces that \[\inf_{n\geq 1}\zeta\pr{\varepsilon_n}\leq \lim_{n\rightarrow\infty}\frac{\abs{b^+\pr{x_n}-b^+\pr{\pi_{\partial K}\pr{x_n}}}}{\delta_K\pr{x_n}}=\frac{\abs{b^+\pr{x^{opt}}-b^+\pr{\pi_{\partial K}\pr{x^{opt}}}}}{\delta_K\pr{x^{opt}}}=\zeta\pr{\varepsilon},\]thus providing us with a contradiction.
\end{proof}
Let us now provide the proof of the necessary local Lipshitz-like criterion stated in Proposition \ref{PropMainNec}
\begin{proof}[Proof of Proposition \ref{PropMainNec}]
First, one notes that the invariance of $\mathring{K}$ (on $[0,T]$) implies $b^+\pr{\pi_{\partial K}(x)}=0,\textnormal{ for }x\in K$ s.t. $\delta_K(x)\leq\varepsilon_0$. Indeed, if one sets $\bar{x}:=\pi_{\partial K}(x)$ and assumes that $b^+(\bar{x})>\delta_0>0$, then, on some neighborhood of $\bar{x}$ (that can be taken of type $\set{y\in K:\norm{y-\bar{x}}<r}$ for some $r>0$) one has $b^+(x)>\delta_0$. By writing down the differential $d\delta_K\pr{X_t^{x}}=-\left\langle \nu_K\pr{\pi_{\partial K}\pr{X_t^{x}}},b\pr{X_t^{x}}\right\rangle dt$, it follows that $X_t^{x}$ reaches $\partial K$ in time $t\leq \frac{\norm{x-\bar{x}}}{\delta_0}$ as long is does not leave the neighborhood of $\bar{x}$ and this can be guaranteed by taking $x$ such that $\norm{x-\bar{x}}\leq\frac{r}{2+2\frac{\norm{b}_0}{\delta_0}}$. This is in contradiction with the invariance of $\mathring{K}$.\\
To prove the bound on the lower limit, we proceed (again) by contradiction. Let us assume that the interior $\mathring{K}$ is invariant, yet $\sup_{\varepsilon>0}\zeta\pr{\varepsilon}=\infty$. Furthermore, we assume (\ref{AssEta}) fails to hold. 
For simplicity reasons, let us denote \[\varepsilon^n:=\sup\left\lbrace \varepsilon\in\left(0,\varepsilon_0\right],\zeta\pr{\varepsilon}\geq max\set{n,n\zeta\pr{\varepsilon_0}}\right\rbrace,\textnormal{ for } n\geq 1.\] 
The reader will easily note that, by left-continuity, $\zeta\pr{\varepsilon^{n}}\geq n\zeta\pr{\varepsilon_0}$, hence $F_\zeta\pr{\frac{1}{\varepsilon^n}}\geq 1-\frac{1}{n}$ and, thus,
 \begin{equation}
\label{ineq0}
\frac{1}{\varepsilon^n}\leq F_\zeta^{-1}\pr{1-\frac{1}{n}}.
\end{equation}
For every $x\in K$ such that $\delta_K(x)\leq\varepsilon^n$, one has $b^+(x)=\abs{b^+(x)-b^+\pr{\pi_{\partial K}(x)}}\geq\zeta\pr{\varepsilon^n}\delta_K(x)\geq n\delta_K(x).$ Then, starting from some $x_n\in \mathring{K}$ such that $\delta_K\pr{x_n}\leq\varepsilon^n$, one has
\begin{equation*}
d\delta_K\pr{X_t^{x_n}}=-\left\langle \nu_K\pr{\bar{X_t^{x_n}}},b\pr{X_t^{x_n}}\right\rangle dt\leq -n\delta_K\pr{X_t^{x_n}}dt.
\end{equation*}
It follows that $x_{n+1}:=X_{t_n}^{x_n}$ satisfies $\delta_K\pr{x_{n+1}}\leq \varepsilon^{n+1}$ for $t_n:=\frac{\ln\varepsilon^n-\ln\varepsilon^{n+1}}{n}$. Arguing by recurrence,  for $k\geq n$, $x_{k}:=X_{t_k}^{x_n}$ satisfies $\delta_K\pr{x_{k}}\leq \varepsilon^{k}$ for $t_k:=\sum_{j=n}^{k}\frac{\ln\varepsilon^j-\ln\varepsilon^{j+1}}{j}.$ In particular, for every \[t>t_\infty:=\sum_{j=n}^{\infty}\frac{\ln\varepsilon^j-\ln\varepsilon^{j+1}}{j},\]one has $X_{t}^{x_n}\in \partial K.$
Owing to (\ref{AssEta}) not holding true, there exists some $\beta>1$ and some constant $C>0$ such that, for $n$ large enough (and using (\ref{ineq0})),
\begin{equation}
\label{BertrandEst}
\frac{-\ln\varepsilon^{k+1}}{k(k+1)}\leq \frac{1}{k\pr{\ln k}^\beta} \frac{1}{k+1}\pr{\ln\pr{k+1}}^\beta\ln \pr{F_\zeta^{-1}\pr{1-\frac{1}{k+1}}}\leq C\frac{1}{k\pr{\ln k}^\beta}.
\end{equation}
Finally, one notes that $t_\infty\leq \sum_{j=n}^{\infty}\frac{-\ln\varepsilon^{j+1}}{j\pr{j+1}}$.
Since, owing to (\ref{BertrandEst}), the last quantity corresponds to the general term of the convergent Bertrand series (with $\alpha=1, \beta>1$), it follows that, for every $T>0$, by picking $n$ large enough, one has $X_t^{x_n}$ exits $\mathring{K}$ prior to $t_\infty<T$. This provides us with a contradiction and the proof is complete.
\end{proof}
\subsection{Krylov's Shaking the Coefficients and Linear Formulations}
Owing to the assertion i. in Proposition \ref{PropKrylov}, one gets the existence of a constant $c_0$ such that \[\mathbb{E}\pp{\int_0^\infty e^{-\lambda t}\norm{X_t^{x,u}}^2dt}\leq c_0\pr{1+\norm{x}^2},\]for all admissible control processes $u$.
We recall some elements taken from \cite{G6}. To any control trajectory $X^{x,u}$ one associates an occupation measure defined by \[\gamma_{x,u}(A)=\lambda\mathbb{E}\pp{\int_{\mathbb{R}_+}e^{-\lambda t}\mathbbm{1}_A\pr{t,X_t^{x,u},u(t)}dt},\]for all Borel sets $A\subset \mathbb{R}_+\times\mathbb{R}\times U$. Whenever convenient, by abuse of notation, the marginal of $\gamma_{x,u}\in\mathcal{P}\pr{\mathbb{R}_+\times\mathbb{R}\times U}$ i.e. $\gamma_{x,u}\pr{\mathbb{R}_+,dy,du}\in \mathcal{P}\pr{\mathbb{R}\times U}$ will still be denoted by $\gamma_{x,u}$. The set of all occupation measures is denoted by $\Gamma(x)\subset \mathcal{P}\pr{\mathbb{R}\times U}$. One shows (cf. \cite[Corollary 2.1]{G6}) that the closed convex hull of $\Gamma(x)$ satisfies\[\Theta(x):=\bar{co}\pr{\Gamma(x)}=\set{\gamma\in \mathcal{P}\pr{\mathbb{R}\times U}:\forall \varphi\in C^2\pr{\mathbb{R}^N}, \int_{\mathbb{R}^N\times U}\pp{\lambda\pr{\varphi(x)-\varphi(x)}+\mathcal{L}^v\varphi(y)}\gamma\pr{dy,dv}=0}. \]\\  This implies that $\Theta(x)\subset \set{\gamma\in \mathcal{P}\pr{\mathbb{R}^N\times U} : \int_{\mathbb{R}^N\times U}\norm{y}^2 \gamma(dy,du)\leq c_0}$ thus being compact.\\ Moreover, since $f_n$ and $\mathbbm{1}_{\pr{\mathring{K}}^c}$ are upper semi-continuous (u.s.c.), one gets \[V(x)=\inf_{\gamma\in \Theta(x)}\int_{\mathbb{R}^N}\mathbbm{1}_{\pr{\mathring{K}}^c}(y)\gamma(dy,U), \ V_n(x)=\inf_{\gamma\in \Theta(x)}\int_{\mathbb{R}^N}f_n(y)\gamma(dy,U),\]for all $x\in \mathbb{R}^N$.
\subsection{Proofs for Section \ref{Section_NearViabDiff}}
First, let us prove the convergence of approximating functions $V_n$ to the viability indicator $V$.
\begin{proof}[Proof of Proposition \ref{VnConvergeToV}]
Let us fix $x \in \mathbb{R}^N$. For every $\gamma \in  \Theta(x)$, the convergence \[ \lim_{n\rightarrow\infty}\int_{\mathbb{R}^N}f_n(y)\gamma(dy,du)=\int_{\mathbb{R}^N}\mathbbm{1}_{\pr{\mathring{K}}^c}(y)\gamma(dy,du)\] follows from point-wise (non-increasing) convergence of $f_n$ to $f$ by applying the dominated convergence theorem. To prove our proposition, we reason by contradiction. To this purpose, let us assume that, for some $\delta>0$, one has $\lim_{n\rightarrow\infty}V_n(x)>V(x)+\delta $. Due to our assumption and the definition of $V_n$, for every $n$ large enough and every $\gamma\in\Theta(x)$, \[\int_{\mathbb{R}^N}f_n(y)\gamma(dy,U)\geq V_n(x)>V(x)+\delta.\]Passing to the limit as $n\rightarrow\infty$ and using the first part of the proof, one gets \[\int_{\mathbb{R}^N}f(y)\gamma(dy,U)\geq V(x)+\delta.\]Taking the infimum over $\gamma\in\Theta(x)$ provides us with a contradiction. The proof is now complete.
\end{proof}
Next, we give the proof of  Lemma \ref{LemmaReductionToKeps} connecting the behavior of $V$ on $\mathring{K}$ to the behavior close to the border.
\begin{proof}[Proof of Lemma \ref{LemmaReductionToKeps}]
We make the notations $K^{\frac{\varepsilon_0}{2}}:=\set{ y\in K\ :\ d_{\partial K}(y)= \frac{\varepsilon_0}{2}}$ and \[\eta_n:=\sup_{y\in K^{\frac{\varepsilon_0}{2}}}V_n\pr{y}.\]
Let us fix, for the time being, $n\geq \frac{2}{\varepsilon}$. In particular, one gets \begin{equation}
\label{fnestim}
f_n(y)=0, \textnormal{ for all } y\in K\setminus K_{\frac{\varepsilon_0}{2}}.
\end{equation} 
We begin with fixing the initial datum $x\in \pr{\mathring{K}\setminus K_\frac{\varepsilon_0}{2}}$ and pick some admissible control process $u^0$. We introduce the stopping time \[\tau:=\inf \left\lbrace t>0 :d_{\partial K}\pr{X_t^{x,u^0}}\leq \frac{\varepsilon_0}{2} \right\rbrace. \]\\
One easily notes, owing to the continuity properties of our solution, that, on the set $\set{\tau<\infty}$, one has $X_{\tau}^{x,u^0}\in K^{\frac{\varepsilon_0}{2}}$ and, thus, $\mathbb{E}\pp{V_n\pr{X_{\tau}^{x,u^0}}\mid X_{0}^{x,u^0}=x}\leq \eta_n$. Then our first assertion follows from this last inequality, the dynamic programming principle and (\ref{fnestim}). Strict inequality is a consequence of the fact that, due to bounded coefficients, $\mathbb{P}\pr{\tau>0}>0$.\\
To prove the second assertion, we begin with proving that 
\begin{equation}
\label{etan}
\lim_{n\rightarrow\infty}\eta_n=0,
\end{equation}whenever the restriction of $V$ to $K_\frac{\varepsilon_0}{2}\setminus\partial K$ is null.
Indeed, if one assumes the contrary, then, for some $\delta>0$ and for every $n\geq 1$ (or, at least, every, $n$ large enough), there exists some $y_n\in K^{\frac{\varepsilon_0}{2}}$ such that $V_n\pr{y_n}\geq \delta.$\\
Since $K^{\frac{\varepsilon_0}{2}}$ is compact, some sub-sequence of $\pr{y_n}_{n\geq 1}$ converges to some $\bar{y}\in K^{\frac{\varepsilon_0}{2}}$. Moreover, due to the monotonicity of $\pr{f_n}_{n\geq 1}$, one has, for every $1\leq n\leq m$,\[V_n(y_m)\geq V_m(y_m)\geq \delta.\] Fixing $n$ and passing to the limit as $m\rightarrow\infty$ (along the sub-sequence mentioned before), it follows that $V_n\pr{\bar{y}}\geq\delta$. Owing to the convergence $\lim_{n\rightarrow\infty}V_n\pr{\bar{y}}=V\pr{\bar{y}}$ (cf. Proposition \ref{VnConvergeToV}), one gets a contradiction. It follows that (\ref{etan}) holds true. 
To conclude the proof of our assertion, one allows $n\rightarrow\infty$ in the first assertion of our proposition and uses (\ref{etan}) and the (point-wise) convergence of $V_n$ to $V$.
\end{proof}
We conclude the subsection with the proof of the main result for Brownian diffusions. 
We consider, on $K_{\varepsilon_0}$, the application $K_{\varepsilon_0}\ni x \mapsto\pi_{\partial K}(x)\in \partial K $ associating to every $x$ its projection on $\partial K$ i.e. the unique $\pi_{\partial K}(x)\in \partial K$ such that $\delta_K(x)=\norm{x-\pi_{\partial K}(x)}$. Moreover, we define the Hamiltonian
\begin{equation}
\label{Hamiltonian}
H\pr{x,r,p,A}=\pr{\lambda-c_\mathcal{L}-c_\sigma^2} r+\sup_{u\in  U: \sigma^*\pr{\pi_{\partial K}(x),u}\nu_K\pr{\pi_{\partial K}(x)}}\pr{-\frac{1}{2}Tr\pp{\sigma \sigma^*\pr{x,u}A}-\left\langle b(x,u),p\right\rangle},
\end{equation}
for all ${x,r,p,A}\in K_{\varepsilon_0}\times\mathbb{R}\times\mathbb{R}^N\times\mathcal{S}^N$.
\begin{proof}
We begin with the proof of the "only if" part. Using the inequality $\mathbbm{1}_{K^c}\leq \mathbbm{1}_{\pr{\mathring{K}}^c}$, the viability indicator function (for the closed set $K$) i.e. \[V_K(y):=\inf_{u\in\mathcal{U}}\mathbb{E}\pp{\int_0^\infty e^{-\lambda t}\mathbbm{1}_{K^c}\pr{X_t^{y,u}}dt},\textnormal{ for all }y\in K\]satisfies $V_K\leq V$. As a consequence, whenever $\mathring{K}$ is near-viable, it follows that $V_K(x)=0$, for all $x\in \mathring{K}$. Standards arguments yield the lower semi-continuity of $V_K$ and, owing to the regularity assumptions guaranteeing $K=\bar{\mathring{K}}$, it follows that $V_K\pr{\bar{x}}=0$, for all $\bar{x}\in \partial K$.\\
Let us now turn to the proof of the "if" part. \\
\underline{Step 1}. (The elements of this step follow closely Krylov's method in \cite{Krylov_00} adapted to our elliptic framework).
We begin with defining, for $\delta>0$, the $\delta$-shaken coefficients value function(s)\[V_{n,\delta}(x):=\inf_{(u,e)\in \mathcal{U}\times\mathcal{E}}\mathbb{E}\pp{\int_0^\infty e^{-\lambda t} f_n\pr{X_t^{x,u,e,\delta}}dt},\ x\in \mathbb{R}^N.\]Using the $n$-Lipschitz properties of $f_n$ together with Proposition \ref{PropKrylov} (assertions ii. and iii.) and owing to the choice of $\lambda\geq 2\lambda_0$, one deduces that $V_n$, $V_{n,\delta}$ are $n$-Lipschitz  and \[\norm{V_{n,\delta}-V_n}_\infty\leq n\delta.\]
By considering a sequence of standard mollifiers $\varphi_\delta$ and owing to the $n$-Lipschitz property of $V_n$, one gets that the convoluted function $V_n^\delta:=V_{n,\delta}\ast\varphi_\delta$ satisfies \[\abs{V_n^\delta(x)-V_n(x)}\leq n\delta,\textnormal{ for all }x\in\mathbb{R}^N.\]
Moreover, $V_n^\delta$  will be a regular subsolution to the Hamilton-Jacobi-Bellman equation
\[\lambda\varphi+\sup_{u\in U}\pr{-\mathcal{L}^u\varphi(x)}-f_n(x)=0,\]on $\mathbb{R}^N$ i.e., for every $\pr{x,u}\in\mathbb{R}^N\times U$, one has
\begin{equation}
\label{VnDeltaSupersol}
-\lambda V_n^\delta(x)+\frac{1}{2}Tr\pp{\sigma\sigma^*\pr{x,u}D^2V_n^\delta(x)}+\left\langle b\pr{x,u},DV_n^\delta(x)\right\rangle+f_n(x)\geq 0.
\end{equation}
\underline{Step 2}. 
The reader is invited to recall the construction of the function $g\in C^{2,1}\pr{\mathbb{R}^N}$ given in Proposition \ref{PropToolsK}. If the closed set $K$ is near-viable, then, according to [BCQ, Theorem A.1. iii], for every $x\in \partial K$ one has 
\begin{equation}
\label{BCQdeltaK}
\sup_{u\in U\ \textnormal{s.t.}\ \sigma^*(x,u)\nu_K(x)=0}\pr{\mathcal{L}^u\pr{\delta_K(x)}}\geq 0.
\end{equation}
Let us now consider, for $n\geq 1$ and $\delta>0$ (arbitrary for the time being), the function \[W_n^\delta:=-gV_n^\delta.\] Then $W_n^\delta$ belongs to $C^{2,1}\pr{\mathbb{R^N}}$. We claim that, whenever $x\in \pr{K_{\varepsilon_0}\setminus K^{\varepsilon_0}}\cap \textnormal{Argmax}_{loc,K} \pr{-W_n^\delta}$ is a local maximum with respect to $K$,  the function $W_n^\delta$ satisfies (at $x$),
\[H\pr{x,\varphi,D\varphi,D^2\varphi}+f_n(x)\delta_K(x)\geq 0,\]where the Hamiltonian is given by 
(\ref{Hamiltonian}).  \\
Case 1. We begin with proving this assertion whenever $x\in \partial{K}$. At such points, $W_n^\delta(x)=f_n(x)g(x)=0.$ Moreover, $DW_n^\delta(x)=-V_n^\delta(x)D\delta_K(x)=V_n^\delta(x)\nu_K(x)\textnormal{ and } D^2W_n^\delta(x)=2DV_n^\delta(x)\otimes \nu_K(x)-V_n^\delta(x)D^2\delta_K(x).$ Here, $\otimes$ denotes the usual tensor product on $\mathbb{R}^N$. Then, using (\ref{BCQdeltaK}), one gets 
\begin{equation}
\begin{split}
&H\pr{x,0,DW_n^\delta(x),D^2W_n^\delta(x)}\\
=&\sup_{\sigma^*\pr{x,u}\nu_K(x)=0}\pr{-\frac{1}{2}Tr\pp{\sigma \sigma^*\pr{x,u}\pr{2DV_n^\delta(x)\otimes \nu_K(x)-V_n^\delta(x)D^2\delta_K(x)}}-\left\langle b(x,u),V_n^\delta(x)\nu_K(x)\right\rangle} \\
=&V_n^\delta(x)\sup_{\sigma^*\pr{x,u}\nu_K(x)=0}\pr{\frac{1}{2}Tr\pp{\sigma \sigma^*\pr{x,u}D^2\delta_K(x)}+\left\langle b(x,u),D\delta_K(x)\right\rangle}\geq 0.
\end{split}
\end{equation}
Case 2. Let us now focus on $x\in K_{\varepsilon_0}\setminus \pr{\partial K\cup K^{\varepsilon_0}}$. In this case, using the classical maximum principle (and recalling that $W_n^\delta$ belongs to $C^{2,1}\pr{\mathbb{R}}^N$), one has
\[
\begin{split}
0=DW_n^\delta(x)&=-V_n^\delta(x)D\delta_K\pr{x}-\delta_K(x)DV_n^\delta(x)=V_n^\delta(x)\nu_K\pr{\pi_{\partial K}(x)}-\delta_K(x)DV_n^\delta(x),\\
0\leq D^2W_n^\delta(x)&=-V_n^\delta(x)D^2\delta_K(x)+2DV_n^\delta(x)\otimes \nu_K\pr{\pi_{\partial K}(x)}-\delta_K(x)D^2V_n^\delta(x).
\end{split}
\]Therefore, by computing the Hamiltonian one has
\[
\begin{split}
&\pr{\lambda-c_\mathcal{L}-c_\sigma^2}W_n^\delta(x)+f_n(x)\delta_K(x)\\
\geq&H\pr{x,W_n^\delta(x),DW_n^\delta(x),D^2W_n^\delta(x)}+f_n(x)\delta_K(x)\\
=&\sup_{\sigma^*\pr{\pi_{\partial K}(x),u}\nu_K\pr{\pi_{\partial K}(x)}=0}\pr{
\begin{split}
&V_n^\delta(x)\pp{c_\mathcal{L}\delta_K(x)+\frac{1}{2}Tr\pp{\sigma \sigma^*\pr{x,u}D^2\delta_K(x)}+\left\langle b(x,u),D\delta_K(x)\right\rangle}\\
+&\delta_K(x)\pp{-\lambda V_n^\delta(x)+\frac{1}{2}Tr\pp{\sigma \sigma^*\pr{x,u}D^2V_n^\delta(x)}+\left\langle b(x,u),DV_n^\delta(x)\right\rangle+f_n(x)}\\
-&c_\sigma^2 W_n^\delta(x)-Tr\pp{\sigma \sigma^*\pr{x,u}DV_n^\delta(x)\otimes \nu_K\pr{\pi_{\partial K}(x)}}\\
\end{split}} \\
=&H\pr{x,W_n^\delta(x),DW_n^\delta(x),D^2W_n^\delta(x)}+f_n(x)\delta_K(x)\\
=&\sup_{\sigma^*\pr{\pi_{\partial K}(x),u}\nu_K\pr{\pi_{\partial K}(x)}=0}\pr{
\begin{split}
&V_n^\delta(x)\pp{c_\mathcal{L}\delta_K(x)+\mathcal{L}^u\delta_K(x)}\\
+&\delta_K(x)\pp{-\lambda V_n^\delta(x)+\frac{1}{2}Tr\pp{\sigma \sigma^*\pr{x,u}D^2V_n^\delta(x)}+\left\langle b(x,u),DV_n^\delta(x)\right\rangle+f_n(x)}\\
-&c_\sigma^2 W_n^\delta(x)-\frac{V_n^\delta(x)}{\delta_K(x)}\norm{\sigma^*\pr{x,u}\nu_K\pr{\pi_{\partial K}(x)}}^2\\
\end{split}} \\
\end{split}
\]
Recalling the notations (\ref{csigmacL}) (see also (\ref{C0})), it follows from this last inequality that 
\[
\begin{split}
&\pr{\lambda-c_\mathcal{L}-c_\sigma^2}W_n^\delta(x)+f_n(x)\delta_K(x)\\
\geq &\sup_{\sigma^*\pr{\pi_{\partial K}(x),u}\nu_K\pr{\pi_{\partial K}(x)}=0}\pr{
\begin{split}
&V_n^\delta(x)\mathcal{L}^u\delta_K\pr{\pi_{\partial K}(x)}\\
+&\delta_K(x)\pp{-\lambda V_n^\delta(x)+\frac{1}{2}Tr\pp{\sigma \sigma^*\pr{x,u}D^2V_n^\delta(x)}+\left\langle b(x,u),DV_n^\delta(x)\right\rangle+f_n(x)}\\
-&c_\sigma^2W_n^\delta(x)-c_\sigma^2\delta_K(x)V_n^\delta(x)
\end{split}}
\end{split}
\]
Owing to (\ref{VnDeltaSupersol}) and (\ref{BCQdeltaK}), one deduces that 
\begin{equation}\label{Ineq1}
\pr{\lambda-c_\mathcal{L}-c_\sigma^2}W_n^\delta(x)+f_n(x)\delta_K(x)\geq 0.
\end{equation}
\underline{Step 3}. In order to prove the near-viability of the open set $\mathring{K}$, we  proceed by contradiction. Let us assume the existence of some $\alpha>0$ and some $x_\alpha\in K_\frac{\varepsilon_0}{2}$ such that $\delta_K(x_\alpha)V\pr{x_\alpha}\geq 2\alpha$. Recalling that $V_n\geq V$ (in fact, point-wise convergence of $V_n$ to $V$ suffices) and $\abs{V_n^{n^{-2}}(x)-V_n(x)}\leq \frac{1}{n}$ (for $x\in \mathbb{R}^N$, see Step 1), it follows that $\max_{x\in K_{\varepsilon_0}}\pr{-W_n^{n^{-2}}(x)}\geq \alpha$ for all $n$ large enough. Let us now denote by $x_n\in K$ the global maximum of $-W_n^{n^{-2}}$ with respect to $K$.\\ 
\underline{Step 3.1}. Owing to Lemma \ref{LemmaReductionToKeps} (and to the fact that $W_n^{n^{-2}}\pr{x}=0$ as soon as $x\in \partial K$), it follows that $x_n\in K_{\varepsilon_0}\setminus \pr{\partial K\cup K^{\varepsilon_0}}$ (at least for $n$ large enough).\\ Indeed, for points $y\in K\setminus K_{\varepsilon_0}\cup K^{\varepsilon_0}$, one has, using the estimates in Step 1 and the dynamic programming principle (with $\tau^u$, as before, the time of $X^{y,u}$ hitting $K_{\frac{\varepsilon_0}{2}}$ with admissible control $u$),
\begin{equation}
\label{goodoptim}
\begin{split}
\delta_K(y)V_n^{\frac{1}{n^2}}(y)&\leq \frac{diam(K)}{n}+\delta_K(y)V_n(y)\leq \frac{diam(K)}{n}+\delta_K(y)\inf_{u\in \mathcal{U}}\mathbb{E}\pp{e^{-\lambda\tau^u}V_n\pr{X_{\tau^u}^{y,u}}}\\
&\leq \frac{diam(K)}{n}+\frac{2}{\varepsilon_0}\delta_K(y)\pp{\mathbb{P}\pr{\tau^u<t^*}+e^{-\lambda t^*}}\sup_{y'\in K^{\frac{\varepsilon_0}{2}}}\delta_K(y')V_n\pr{y'}.
\end{split}
\end{equation}
One recalls that $t^*$ has been introduced in (\ref{Ineqlambda1}) and notes that, since $y$ is at least $\frac{\varepsilon_0}{2}$-far from $K^{\frac{\varepsilon_0}{2}}$, 
\begin{equation*}
\mathbb{P}\pr{\tau^u<t^*}\leq \mathbb{P}\pr{\sup_{0\leq t\leq t^*}\norm{X_t^{y,u}-y}\geq \frac{\varepsilon_0}{2}}\leq \frac{\varepsilon_0}{8diam(K)}.
\end{equation*}
Using the choice of $\lambda$ (again by (\ref{Ineqlambda1})) in (\ref{goodoptim}), we get
\begin{equation}
\begin{split}
\delta_K(y)V_n^{\frac{1}{n^2}}(y)\leq \frac{diam(K)}{n}+\frac{1}{2}\sup_{y'\in K^{\frac{\varepsilon_0}{2}}}\delta_K(y')V_n\pr{y'}<\frac{2diam(K)}{n}+\frac{1}{2}\sup_{y'\in K^{\frac{\varepsilon_0}{2}}}\delta_K(y')V_n^{\frac{1}{n^2}}\pr{y'}.
\end{split}
\end{equation}
For $n$ large enough (s.t. $\frac{2diam(K)}{n}<\frac{\alpha}{2}$), it follows, as announced, that $x_n\in K_{\varepsilon_0}\setminus \pr{\partial K\cup K^{\varepsilon_0}}$.\\
\underline{Step 3.2}. Since $x_n$ is a global maximum, using Step 2, one gets 
\[\frac{1}{4n}=\max_{x\in K_\frac{\varepsilon_0}{2}}f_n(x)\delta_K(x)\geq f_n(x_n)\delta_K(x_n)\geq -\pr{\lambda-c_\mathcal{L}-c_\sigma^2}W_n^{n^{-2}}(x)\geq \pr{\lambda-c_\mathcal{L}-c_\sigma^2}\alpha.\]
Letting $n\rightarrow \infty$ leads to a contradiction. 
\end{proof}
\subsection{Sketch of the Proof(s) for Section \ref{SectionPDMP}}
\begin{proof}[(Sketch of the) Proof of Theorem \ref{ThmMainPDMP}]
To prove the necessity of the condition (\ref{ViabSwitch}), one simply notes that near-viability of $E\times\mathring K$ implies the same property for $E\times K$ (as it was the case for diffusions). Finally, the condition (\ref{ViabSwitch}) follows from \cite{G8}.\\
Let us now give some elements for the sufficiency. Lemma \ref{LemmaReductionToKeps} needs no modification (if not quoting the dynamic programming principle in \cite{Soner86_2}; see also \cite{G8}). Concerning the modifications to the proof of Theorem \ref{ThmMainDiff} one proceeds as follows. The estimates in Step 1 are no longer of type $n\delta$ but given by some modulus of continuity taken from \cite[Section 6.1]{Goreac_SIAM_2015} (see also \cite[Theorem 3.6]{G8}). Step 2 does not need any changes. Indeed, among the three terms on the right-side in Case 2 (the only one of real interest) one notes the following. The first term applies to $\delta_K$ and the integral term in the infinitesimal operator is $0$. The second term is lower-bounded by $0$ as consequence of the construction of the regular subsolution. Finally, all the terms in which $\sigma$ appears are null.  For Step 3, one no longer picks $\delta=\frac{1}{n^2}$, but some $\delta_n$ (according to Step 1), such that $\abs{V_n^{\delta_n}\pr{\gamma_\alpha,x_\alpha}-V_n\pr{\gamma_\alpha,x_\alpha)}}\leq \frac{1}{n}$. (In this case, the reasoning by contradiction would provide some $\pr{\gamma_\alpha,x_\alpha}$ for which $\delta_K\pr{x_\alpha}V\pr{\gamma_\alpha,x_\alpha}\geq 2\alpha$).
\end{proof}
\bibliographystyle{plain}
\bibliography{bibliografie_2018}
\end{document}